\newcounter{theorem}
\newtheorem{thm}[theorem]{Theorem}
\newtheorem{lemma}[theorem]{Lemma}
\newtheorem{prop}[theorem]{Proposition}
\newtheorem{cor}[theorem]{Corollary}
\theoremstyle{remark}
\newtheorem*{remark*}{Remark}
\newtheorem{remark}[theorem]{Remark}
\newtheorem{question}[theorem]{Question}
\numberwithin{equation}{section}
\numberwithin{theorem}{section}
\newcommand{\N}{\mathbb{N}}
\renewcommand{\setminus}{\backslash}
\newcommand{\id}{\mathrm{id}}
\newcommand{\Tor}{\mathrm{Tor}_1^{\Z}}
\newcommand{\Ext}{\mathrm{Ext}_1^{\Z}}
\newcommand{\Q}{\mathbb Q}
\newcommand{\Z}{\mathbb Z}
\begin{document}

\title[Characterization of approximately inner flip]{K-theoretic characterization of C*-algebras with approximately inner flip}
\author{Aaron Tikuisis}
\address{Institute of Mathematics\\
University of Aberdeen\\
Aberdeen, UK AB24 3UE}
\urladdr{http://homepages.abdn.ac.uk/a.tikuisis/}
\email{a.tikuisis@abdn.ac.uk}

\begin{abstract}
It is determined exactly which classifiable C*-algebras have approximately inner flip.
The answer includes a number of C*-algebras with torsion in their K-theory, and a number of C*-algebras that are self-absorbing but not strongly self-absorbing.
\end{abstract}

\thanks{The author is supported by an NSERC PDF}
\subjclass[2010]{46L05, 46L06, 46L35, 46L80, (46L40, 46L85)}

\keywords{Nuclear $\mathrm C^*$-algebras; K-theory; approximately inner flip; strongly self-absorbing C*-algebras; classification of nuclear C*-algebras}

\maketitle

\section{Introduction}

The concept of approximately inner flip for C*-algebras was first studied by Effros and Rosenberg in \cite{EffrosRosenberg}.
The concept has a close connection to strongly self-absorbing algebras defined by Toms and Winter \cite{TomsWinter:ssa}, a prominent idea in the Elliott classification programme \cite{BBSTWW,DadarlatPennig,DadarlatWinter:Trivialization,ENST,ElliottToms,GongLinNiu,HirshbergRordamWinter,MatuiSato:comp,MatuiSato:dr,Rordam:Z,SWW:Znucdim,TW:Zdr,TomsWinter:ZASH,Winter:drZstable,Winter:pure,Winter:localizing}.

Effros and Rosenberg showed that the class of C*-algebras with approximately inner flip is fairly restricted: it is contained in the class of simple, nuclear C*-algebras with at most one trace.
By considering the case of AF algebras, they showed that, in fact, approximately inner flip entails considerably more restrictions than just simplicity, nuclearity, and at most one trace.
Their result \cite[Theorem 3.9]{EffrosRosenberg} is that an AF algebra with approximately inner flip is stably isomorphic to a UHF algebra.
This is easily reformulated as a K-theoretic characterization of AF algebras with approximately inner flip: an AF algebra $A$ has approximately inner flip if and only if $K_0(A)$ is a subset of $\Q$.

This article generalizes this K-theoretic analysis of approximately inner flip, subject to the Universal Coefficient Theorem (UCT) (without which, serious K-theoretic computations are hopeless).
It is shown that if $A$ has approximately inner flip and satisfies the UCT, then $K_*(A):=K_0(A)\oplus K_1(A)$ is isomorphic (as an ungraded group) to one of the following groups:
\begin{enumerate}
\item $0$;
\item $\Z$;
\item $\Q_n$;
\item $\Q_m/\Z$;
\item $\Q_n \oplus \Q_m/\Z$,
\end{enumerate}
where in (iii)-(v), $n$ and $m$ are supernatural numbers of infinite type such that $m$ divides $n$ (see Section \ref{sec:Notation} for the definition of $\Q_n$).
The result is tight, in that every one of these groups (with any $\Z_2$-grading and any unperforated ordering) does arise as $K_*(A)$ for some C*-algebra with approximately inner flip.
In fact, if $A$ is a classifiable C*-algebra (in the sense of \cite{Phillips:piClass} and \cite{GongLinNiu}), then $A$ has approximately inner flip if and only if $K_*(A)$ is one of these groups and $A$ has at most one trace.
Interestingly, this provides a significant number of C*-algebras, even self-absorbing ones, with torsion in their K-theory (namely, from cases (iv) and (v)).

Some notable consequences of our results and methods are as follows, where $A$ is a classifiable C*-algebra:
\begin{enumerate}
\item If $A \otimes A$ has approximately inner flip then so does $A$;
\item If $A$ has approximately inner flip then it has asymptotically inner flip;
\item If $A$ has approximately inner flip then $A \otimes A \otimes \mathcal K$ is self-absorbing.
\end{enumerate}
(The first two of these are packaged into the main theorem, Theorem \ref{thm:MainThm}, while the third is Corollary \ref{cor:AAsa}.)
This prompts the question: which of these three facts can be shown without assuming that $A$ is classifiable?

Let us compare the situation to that of strongly self-absorbing C*-algebras.
Strongly self-absorbing C*-algebras have approximately inner flip, but have significantly more structure, allowing results such as (ii) to be proven without using classification, as done by Dadarlat and Winter \cite{DadarlatWinter:KKssa} (in fact, they show that any automorphism of a strongly self-absorbing C*-algebra is asymptotically inner).
Toms and Winter, in the same article that introduced the concept of strongly self-absorbing C*-algebras, established which possible K-theories can arise for strongly self-absorbing C*-algebra in the UCT class \cite[Proposition 5.1]{TomsWinter:ssa}; the main result of this article is a natural extension of their work.

Naturally, our main tool for analyzing the flip in terms of K-theory is the K\"unneth formula for C*-algebras due to Schochet \cite{Schochet:KunnethThm}, a short exact sequence relating $K_*(A\otimes A)$ to $K_*(A)\otimes K_*(A)$ and $\Tor(K_*(A),K_*(A))$.
However, one needs to know how the flip map $A \otimes A \to A \otimes A$ interacts with this exact sequence.
In Section \ref{sec:KunnethFlip}, we solve this problem; the map between the Tor components comes from the natural isomorphism $\Tor(G,H) \cong \Tor(H,G)$, which we first describe in Section \ref{sec:TorFlipMap}.

In Section \ref{sec:Family}, we introduce a family of C*-algebras (representatives for the groups listed above with possible gradings); having introduced notation for these C*-algebras, we state the main result, Theorem \ref{thm:MainThm}.
In Section \ref{sec:Suff}, we show the C*-algebras in this family each have approximately inner flip.
This result is useful in establishing, in Section \ref{sec:Necessary}, that if $A$ satisfies the UCT and has approximately inner flip, then $K_*(A)$ is one of the groups above.
Finally, in Section \ref{sec:Semigroup}, we explore the classifiable C*-algebras with approximately inner flip, by describing the semigroup of isomorphism classes of such C*-algebras, under the operation of $\otimes$.

\subsection*{Acknowledgments}

I would like to thank Bradd Hart, Ilijas Farah, and Mikael R\o rdam for discussions that contributed to this article.
In particular, Mikael R\o rdam suggested the problem, and Ilijas Farah made suggestions that led to a simpler proof of Proposition \ref{prop:PruferFlip}. 
I would moreover like to thank Sean Tilson for helping to explain a spectral sequence proof of Lemma \ref{lem:KunnethFlip} for topological K-theory.

\subsection{Notation}
\label{sec:Notation}

Let $A,B$ be C*-algebras.
Write $K_*(A):=K_0(A) \oplus K_1(A)$.
The suspension of $A$ is $SA:=C_0((0,1),A)$.
Write $A \otimes B$ to denote the minimal tensor product of $A$ and $B$.
The C*-algebras $A$ and $B$ are said to be stably isomorphic if they satisfy $A \otimes \mathcal K \cong B \otimes \mathcal K$ (in case $A$ and $B$ are separable, this is the same as Morita equivalence).
Denote by $\sigma_{A,B}$ the flip isomorphism $A \otimes B \to B \otimes A$, defined on elementary tensors by $\sigma_{A,B}(a \otimes b) = b \otimes a$.
Following \cite{EffrosRosenberg}, we say that $A$ has \textbf{approximately inner flip} if there is a net $(u_\lambda)$ of unitaries on $A$ such that
\[ \lim_\lambda \|u_\lambda xu_\lambda^* - \sigma_{A,A}(x)\| = 0, \quad x \in A \otimes A. \]
When $A$ is separable and has approximately inner flip, the net $(u_\lambda)$ can be taken to be a sequence.
If $A$ is separable, we say that $A$ has \textbf{asymptotically inner flip} if there is a continuous function $t \mapsto u_t$ from $[0,\infty)$ to the unitaries in $A \otimes A$, such that 
\[ \lim_{t\to\infty} \|u_t xu_t^* - \sigma_{A,A}(x)\| = 0, \quad x \in A \otimes A. \]

A \textbf{Kirchberg algebra} is a simple, separable, purely infinite, nuclear C*-algebra.
The class of Kirchberg algebras which satisfy the UCT has been classified by K-theory, by Kirchberg and Phillips (\cite{Kirchberg:piClass,Phillips:piClass}; see \cite[Chapter 8]{Rordam:ClassBook}).
We use $\mathcal O^\infty$ to denote the unital Kirchberg algebra in the UCT class that satisfies
\[ K_0(\mathcal O^\infty) \cong 0,\quad K_1(\mathcal O^\infty) \cong \Z. \]

Let $G,H$ be abelian groups.
For a prime number $p$, the group $G$ is a \textbf{$p$-group} if every $x \in G$ satisfies, for some $k\in\N$, $p^kx = 0$.
Denote by $\sigma_{G,H}$ the flip isomorphism $G \otimes H \to G \otimes H$, defined on elementary tensors by $\sigma_{G,H}(g \otimes h) = g \otimes h$.
(This should not be confused with the flip $\sigma_{A,B}$ on the C*-algebraic tensor product $A \otimes B$, because it is absurd to treat a C*-algebra as merely an abelian group.)

A \textbf{supernatural number} is a formal product
\begin{equation}
\label{eq:SNproduct}
 \prod_p p^{k_p},
\end{equation}
where the product is taken over all primes and $k_p \in \{0,1,2,\dots,\infty\}$ for each $p$.
Every natural number is a supernatural number.
Supernatural numbers may be multiplied (even infinitely many times) and the multiplication operation is also used to define what is meant by one supernatural number, $m$, dividing another, $n$ (in symbols, $m|n$).
A supernatural number $n$ is of infinite type if it is equal to its square (i.e., expressing $n$ as in \eqref{eq:SNproduct}, if $k_p \in \{0,\infty\}$ for each $p$); note that by this definition, $1$ is of infinite type.

If $n$ is a supernatural number, we define
\[ \Q_n := \Big\{\frac pq \in \Q \Big| p \in \mathbb Z, q \in \mathbb N, q|n\Big\}. \]
Note when $n=k^\infty$, $k \in \N$, then $\Q_n = \Z[1/k]$.
If $n$ is a supernatural number of infinite type, then $\Q_n/\Z \cong \bigoplus_p \Q_{p^\infty}/\Z$ where the direct sum is taken over all primes $p$ that divide $n$.

If $n$ is a supernatural number and $G$ is an abelian group, we say that a group $H$ is $n$-divisible if it is $p$-divisible for every prime $p$ which divides $n$.

\begin{lemma}
\label{lem:DivSplit}
Let $G$ be an abelian group and let $H \subset G$ be a subgroup, such that $G/H \cong \Q_{n}/\Z$ or $\Q_n$ for some supernatural number $n$.
If $H$ is $n$-divisible then $G \cong H \oplus G/H$.
\end{lemma}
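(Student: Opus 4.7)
The plan is to construct a group-theoretic section $s \colon G/H \to G$ of the quotient map $\pi \colon G \to G/H$. Once such an $s$ is in hand, the map $g \mapsto (g - s(\pi(g)),\pi(g))$ is an isomorphism $G \iso H \oplus G/H$, so the entire problem reduces to producing compatible preimages in $G$ of a generating set of $G/H$.

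I would treat the two cases for $G/H$ separately, though both follow the same pattern. For $G/H \iso \Q_n$, I would first fix a cofinal chain $1 = q_0 \mid q_1 \mid q_2 \mid \cdots$ in the natural-number divisors of $n$, so that every element of $\Q_n$ has the form $a/q_k$ for some $k\geq 0$ and $a\in\Z$. Starting from an arbitrary preimage $y_0 \in G$ of $1\in\Q_n$, I would inductively pick $y_{k+1}\in G$ with $\pi(y_{k+1}) = 1/q_{k+1}$ and $(q_{k+1}/q_k)\,y_{k+1} = y_k$. This is where the hypothesis on $H$ is used: an arbitrary preimage $y'_{k+1}$ of $1/q_{k+1}$ satisfies $(q_{k+1}/q_k)\,y'_{k+1} - y_k \in H$, and since $q_{k+1}/q_k$ divides $n$, the $n$-divisibility of $H$ lets us write this element as $(q_{k+1}/q_k)\,h$ for some $h\in H$; then $y_{k+1} := y'_{k+1} - h$ works. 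Setting $s(a/q_k) := a\,y_k$ produces the section, with well-definedness following from the compatibility relations.

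For $G/H \iso \Q_n/\Z$, I would use the decomposition $\Q_n/\Z \iso \bigoplus_{p \mid n} \Q_{p^\infty}/\Z$ recorded in Section \ref{sec:Notation}. For each prime $p\mid n$, I would build a section $s_p \colon \Q_{p^\infty}/\Z \to G$ by taking $y_{p,0} := 0$ and inductively picking $y_{p,k+1}\in G$ with $\pi(y_{p,k+1}) = 1/p^{k+1}+\Z$ and $p\,y_{p,k+1} = y_{p,k}$, where the inductive step uses $p$-divisibility of $H$ exactly as before. Because $y_{p,0}=0$, the relation $p\,y_{p,k+1} = y_{p,k}$ forces $p^{k+1} y_{p,k+1} = 0$ automatically, so $s_p(a/p^k + \Z) := a\,y_{p,k}$ is a well-defined homomorphism. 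The $s_p$ then assemble into $s$ by the universal property of the direct sum.

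The only substantive point is the inductive step, which requires correcting a naive preimage by an element of $H$ with a prescribed divisibility property — and that is exactly what the hypothesis on $H$ delivers. Verifying well-definedness and additivity of the formulas defining $s$ is then a routine unwinding of the compatibility relations imposed during the construction.
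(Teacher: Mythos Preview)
Your argument is essentially the paper's: build a section of $\pi$ by choosing compatible lifts $y_k$ of $1/q_k$ along a cofinal divisor chain, correcting at each step using $n$-divisibility of $H$. The paper treats both quotients uniformly with a single chain $n_0\mid n_1\mid\cdots$, taking $b_0$ an arbitrary lift of $1$ (case $\Q_n$) or $b_0=0$ (case $\Q_n/\Z$); your $\Q_n$ case is identical to this, and your $\Q_n/\Z$ case differs only in that you first split into $p$-primary components.

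One small point: the decomposition $\Q_n/\Z\cong\bigoplus_{p\mid n}\Q_{p^\infty}/\Z$ you invoke is stated in Section~\ref{sec:Notation} only for $n$ of infinite type, while the lemma allows arbitrary $n$ (so some $p$-components may be finite cyclic rather than Pr\"ufer). This is harmless---your inductive construction still applies to each $p$-component, just terminating after finitely many steps when the exponent of $p$ in $n$ is finite---but the cleanest fix is simply to run your $\Q_n$ chain argument verbatim for $\Q_n/\Z$ with base case $y_0=0$, exactly as the paper does.
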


\begin{proof}
Assume that $H$ is $n$-divisible, and let us show that the exact sequence 
\[
\xymatrix{
0 \ar[r] & H \ar[r] & G \ar[r]^-{\pi} & G/H \ar[r] & 0
} \]
splits.
Identify $G/H$ with $\Q_n/\Z$ or $\Q_n$ for notational simplicity.

Let $n=m_1m_2\cdots$ where $m_i \in \N$ for each $i$, and set $n_i:=m_1\cdots m_i$ for each $i \geq 0$.
Then
\[ \Q_n \cong \langle a_0,a_1,a_2,\dots \mid m_ia_i = a_{i-1}\rangle, \]
and
\[ \Q_n/\Z \cong \langle a_0,a_1,a_2,\dots \mid m_ia_i = a_{i-1}, a_0=0\rangle, \]
in both cases by identifying $\frac1{n_i}$ with $a_i$.

Therefore, to define a splitting of $\pi$, we need to find $b_i \in G$ such that $\pi(b_i) = \frac1{n_i}$ and $m_ib_i = b_{i-1}$ for all $i \geq 1$ and additionally, in case $G/H\cong \Q_n/\Z$, $b_0=0$.
We do this recursively.
If $G/H \cong \Q_n$, set $b_0$ equal to any lift of $1$; if $G/H \cong \Q_n/\Z$, set $b_0=0$.

Having defined $b_{i-1}$, choose any $c_i \in G$ such that $\pi(c_i)=\frac1{n_i}$.
Thus $\pi(m_ic_i - b_{i-1}) = 0$ so that $m_ic_i-b_{i-1} \in H$.
Since $H$ is $m_i$-divisible, there exists $z \in H$ such that $m_ic_i-b_{i-1}=m_iz$.
Thus, we may set $b_i:=c_i-z$, so that $\pi(b_i)=\pi(c_i)=\frac1{n_i}$ and $m_ib_i = b_{i-1}$ as required.
\end{proof}

\section{A family of C*-algebras}
\label{sec:Family}

For supernatural numbers $n$, $m_0$, and $m_1$, let $\mathcal E_{n,m_0,m_1}$ be the simple, separable, unital, $\mathcal Z$-stable, quasidiagonal C*-algebra with unique trace that satisfies
\begin{align*}
K_0(\mathcal E_{n,m_0,m_1})&=\Q_n \oplus \Q_{m_0}/\Z, \\
[1]_0&= 1 \oplus 0, \quad \text{and} \\
K_1(\mathcal E_{n,m_0,m_1})&=\Q_{m_1}/\Z.
\end{align*}

For supernatural numbers $m_0,m_1$, let $\mathcal F_{m_0,m_1}$ be the unital Kirchberg algebra in the UCT class that satisfies
\begin{align*}
K_0(\mathcal F_{m_0,m_1})&=\Q_{m_0}/\Z, \\
[1]_0&= 0, \quad \text{and} \\
K_1(\mathcal F_{m_0,m_1})&=\Q_{m_1}/\Z.
\end{align*}

\begin{remark}
\label{rmk:EFdef}
It is shown by Elliott in \cite[Theorem 5.2.3.2]{Elliott:ashrange} that the algebra $\mathcal E_{n,m_0,m_1}$ algebra exists, and Matui and Sato have shown that it is unique \cite[Corollary 6.2]{MatuiSato:dr} (which makes crucial use of the classification results of Winter \cite{Winter:RR0drClass,Winter:localizing} and Lin-Niu \cite{LinNiu:Lifting}).

By R\o rdam \cite{Rordam:pirange}, the algebra $\mathcal F_{m_0,m_1}$ exists, and by Kirchberg and Phillips classification \cite{Phillips:piClass}, it is unique.
\end{remark}

Some of these algebras are important and/or already well-known:

\begin{enumerate}
\item
$\mathcal E_{1,1,1} \cong \mathcal Z$ (the Jiang-Su algebra \cite{JiangSu}).
\item
$\mathcal E_{n,1,1} \cong M_n$ (a UHF algebra) for any infinite supernatural number $n$.
\item $\mathcal F_{1,1} \cong \mathcal O_2$ (a Cuntz algebra \cite{Cuntz:On}).
\item
Letting $m$ be the product of all primes infinitely many times, the C*-algebra
$T:= \mathcal F_{1,m}$ has the property that, if $A$ is a Kirchberg algebra in the UCT class, then $A$ is stably isomorphic to $A \otimes T$ if and only if $K_*(A)$ is a torsion group.
This follows from the K\"unneth formula (see \eqref{eq:KunnethFormula} below) and Kirchberg-Phillips classification \cite{Kirchberg:piClass,Phillips:piClass}.
\end{enumerate}

Our main result is the following.

\begin{thm}
\label{thm:MainThm}
Let $A$ be a separable, unital, C*-algebra with strict comparison, in the UCT class, which is either infinite or quasidiagonal.
The following are equivalent.
\begin{enumerate}
\item $A$ has approximately inner flip;
\item $A \otimes A$ has approximately inner flip;
\item $A$ has asymptotically inner flip;
\item $A$ is simple, nuclear, has at most one trace and $K_*(A)$ (as an ungraded, unordered group) is isomorphic to one of $0$, $\Z$, $\Q_n$, $\Q_m/\Z$, or $\Q_n \oplus \Q_m/\Z$, where $n,m$ are supernatural numbers of infinite type and $m$ divides $n$;
\item $A$ is stably isomorphic to one of:
\begin{enumerate}
\item $\mathbb C$;
\item $\mathcal E_{n,m_0,m_1}$;
\item $\mathcal E_{n,m_0,m_1} \otimes \mathcal O_\infty$;
\item $\mathcal E_{n,m_0,m_1} \otimes \mathcal O^\infty$; or
\item $\mathcal F_{m_0,m_1}$,
\end{enumerate}
where in $\mathrm{(b)}$-$\mathrm{(e)}$, $n$, $m_0$, and $m_1$ are supernatural numbers of infinite type such that $m_0,m_1$ are coprime and $m_0m_1|n$.
\end{enumerate}
\end{thm}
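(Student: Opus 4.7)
The plan is to establish the cycle (iii) $\Rightarrow$ (i) $\Rightarrow$ (ii) $\Rightarrow$ (iv) $\Rightarrow$ (v) $\Rightarrow$ (iii); most implications assemble results from earlier sections and classification, and the principal obstacle is the K-theoretic descent from $A\otimes A$ to $A$ in (ii) $\Rightarrow$ (iv). The implication (iii) $\Rightarrow$ (i) is immediate. For (i) $\Rightarrow$ (ii), I would decompose the flip $\sigma_{A \otimes A, A \otimes A}$ on $A^{\otimes 4} = (A \otimes A) \otimes (A \otimes A)$ using the $S_4$ factorization
\[ (1\,3)(2\,4) = (2\,3)(1\,2)(3\,4)(2\,3); \]
each adjacent transposition acts as $\sigma_{A,A}$ on two consecutive tensor factors extended by identities, and so is approximately inner by hypothesis (i). Composing four approximately inner automorphisms yields approximately inner flip on $A\otimes A$.

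For (ii) $\Rightarrow$ (iv), Effros-Rosenberg \cite{EffrosRosenberg} applied to $A \otimes A$ forces it to be simple, nuclear, and to have at most one trace; all three properties pass down to $A$ (distinct traces on $A$ would give distinct traces on $A \otimes A$ via tensor pairings, and simplicity/nuclearity are well-behaved with respect to tensor factors). Separability, unitality, the UCT (by Schochet), strict comparison (via $\jsZ$-stability), and the ``quasidiagonal or infinite'' dichotomy are all preserved under tensor squaring, so Section \ref{sec:Necessary} applies to $A \otimes A$ and places $K_*(A \otimes A)$ into the list of five groups. The K\"unneth short exact sequence
\[ 0 \to K_*(A) \otimes K_*(A) \to K_*(A \otimes A) \to \Tor(K_*(A), K_*(A)) \to 0 \]
then descends this constraint to $K_*(A)$: since the listed groups have torsion-free rank at most one and divisible torsion part, any $K_*(A)$ not in the list has either torsion-free rank $\geq 2$ (forcing its tensor square to have rank $\geq 4$) or a non-divisible torsion summand (forcing $\Tor(K_*(A), K_*(A))$ to contain a non-divisible subquotient), contradicting the shape of $K_*(A \otimes A)$. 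Pinning down precisely which groups survive this algebraic squeeze---and using the flip-compatibility of the K\"unneth SES from Section \ref{sec:KunnethFlip} to rule out pathological extensions---is the technical heart of the step.

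For (iv) $\Rightarrow$ (v), I would match the K-theory of $A$ together with the position of $[1_A]_0$ to one of the listed algebras, invoking Kirchberg-Phillips classification \cite{Phillips:piClass} in the infinite case and Gong-Lin-Niu \cite{GongLinNiu} in the quasidiagonal case; the supernatural parameters $n, m_0, m_1$ are extracted by splitting $\Q_m/\Z$ into its $K_0$-contribution $\Q_{m_0}/\Z$ and $K_1$-contribution $\Q_{m_1}/\Z$, with coprimality a consequence of the decomposition $\Q_m/\Z \iso \bigoplus_p \Q_{p^\infty}/\Z$. Finally, for (v) $\Rightarrow$ (iii), Section \ref{sec:Suff} provides approximately inner flip for each algebra in the family, and to promote this to asymptotically inner flip I would appeal to classification of automorphisms of classifiable C*-algebras in the style of Dadarlat-Winter \cite{DadarlatWinter:KKssa}: since the flip is $KK$-equivalent to the identity on each member of the family (which in turn uses the K\"unneth description of the flip on $K_*(A\otimes A)$), it is asymptotically unitarily equivalent to the identity. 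This upgrade is the second genuine obstacle, though it reduces cleanly to now-standard classification technology for the algebras in the family.
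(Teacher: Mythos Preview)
Your cycle of implications matches the paper's, and the handling of (iii)$\Rightarrow$(i), (i)$\Rightarrow$(ii), and (iv)$\Rightarrow$(v) agrees with it (your permutation factorization for (i)$\Rightarrow$(ii) simply spells out what the paper calls ``immediate''). Two steps, however, rest on misreadings of what Sections~\ref{sec:Suff} and~\ref{sec:Necessary} actually deliver.

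For (ii)$\Rightarrow$(iv): Theorem~\ref{thm:Necessary} is stated with hypothesis ``$A\otimes A$ has approximately inner flip'' and conclusion about $K_*(A)$, not $K_*(A\otimes A)$; it applies directly to $A$, so no descent through the K\"unneth sequence is needed. Your proposed descent is not only a detour but has a genuine gap: the dichotomy ``torsion-free rank $\geq 2$ or a non-divisible torsion summand'' does not exhaust the abelian groups outside the list. For instance, $\Q_n$ with $n$ infinite but not of infinite type, or $\Q_n \oplus \Q_m/\Z$ with $m\nmid n$, both have rank one and divisible torsion yet lie outside the list. The paper's proof of Theorem~\ref{thm:Necessary} takes a different tack entirely: it tensors $A$ with the universal UHF algebra $\mathcal Q$ and with $\mathcal F_{1,p^\infty}$ to isolate the rank and the $p$-primary torsion of $K_*(A)$ one prime at a time, invoking Lemmas~\ref{lem:BasicRestrictions} and~\ref{lem:NoCycSummand} at each stage.

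For (v)$\Rightarrow$(iii): Corollary~\ref{cor:Suff} already establishes \emph{asymptotically} inner flip for the algebras in the family, using Kirchberg--Phillips classification in the purely infinite cases and Lin's asymptotic-unitary-equivalence theorem \cite{Lin:AsympEquiv} in the stably finite case. The upgrade you flag as the ``second genuine obstacle'' is therefore already carried out inside Section~\ref{sec:Suff}; nothing further is required.
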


\begin{remark}
\label{rmk:MainThmClass}
We call the C*-algebras satisfying the hypotheses and equivalent conditions of Theorem \ref{thm:MainThm} the separable unital \textbf{classifiable C*-algebras with approximately inner flip}.
Understanding that ``classifiable'' means ``classifiable by K-theory and traces,'' then the classification results to date (including Kirchberg-Phillips' classification of purely infinite C*-algebras \cite{Kirchberg:piClass,Phillips:piClass} and the Gong-Lin-Niu classification of C*-algebras of generalized tracial rank one \cite{GongLinNiu}) permit an extremely reasonable definition of a simple separable unital ``classifiable C*-algebra'' as meaning a simple separable unital C*-algebra which satisfies the UCT and is either purely infinite or has generalized tracial rank one (in the sense of \cite[Definition 9.2]{GongLinNiu}).
Certainly, the aforementioned classification results show that the class of such C*-algebras is classifiable and exhausts the range of the Elliott invariant; hence, this is a maximal classifiable class.
The C*-algebras satisfying the hypotheses and equivalent conditions of Theorem \ref{thm:MainThm} are precisely the C*-algebras in this classifiable class which have approximately inner flip (this is entailed by the theorem).

It is a long-standing open question whether there are simple nuclear C*-algebras that (i) don't satisfy the UCT or (ii) are stably finite but not quasidiagonal.
Even for the much smaller and deeply studied class of strongly self-absorbing C*-algebras, this question is open.

However, unlike the class of strongly self-absorbing C*-algebras, it is unknown whether there exists a C*-algebra with approximately inner flip which does not have strict comparison (equivalently, whether it is $\mathcal Z$-stable, by \cite{MatuiSato:comp}).
\end{remark}

\begin{question}
(i)
Does there exist a C*-algebra with approximately inner flip which is not $\mathcal Z$-stable?

(ii)
Does there exist a C*-algebra $A$ such that $A \otimes A$ is strongly self-absorbing, but $A$ is not $\mathcal Z$-stable?
\end{question}

A positive answer to (ii) would imply a positive answer to (i), since if $A \otimes A$ is strongly self-absorbing then all of its automorphisms are approximately inner.

\section{The flip map on \texorpdfstring{$\Tor(G,G)$}{Tor(G,G)}}
\label{sec:TorFlipMap}

Let $G_1,G_2$ be abelian groups.
The flip isomorphism $\sigma_{G_1,G_2}: G_1 \otimes G_2 \to G_2 \otimes G_1$ induces a natural isomorphism
\[ \eta_{G_1,G_2}:\Tor(G_1,G_2) \to \Tor(G_2,G_1). \]

Here is a description of $\eta_{G_1,G_2}$.
Fix a free abelian group $F_i$ that surjects onto $G_i$; the kernel $H_i$ of this surjection is also a free group, and we get an exact sequence
\[ \xymatrix{
0 \ar[r] & H_i \ar[r] & F_i \ar[r] & G_i \ar[r] & 0 } \]
(called a \textbf{free resolution} of $G_i$.)

This induces a double-complex with exact rows and columns:

\begin{equation}
\label{eq:etaDoubleComplex}
 \xymatrix{
&&&& 0 \ar[d] & \\
&& 0 \ar[d] & 0 \ar[d] & \Tor(G_1, G_2) \ar[d] & \\
&0 \ar[r] & H_1 \otimes H_2 \ar[r]^-{\alpha_{11}} \ar[d]^-{\beta_{11}} & H_1 \otimes F_2 \ar[r]^-{\alpha_{12}} \ar[d]^-{\beta_{12}} & H_1 \otimes G_2 \ar[r] \ar[d]^-{\beta_{13}} & 0 \\
&0 \ar[r] & F_1 \otimes H_2 \ar[r]^-{\alpha_{21}} \ar[d]^-{\beta_{21}} & F_1 \otimes F_2 \ar[r]^-{\alpha_{22}} \ar[d]^-{\beta_{22}} & F_1 \otimes G_2 \ar[r] \ar[d]^-{\beta_{23}} & 0 \\
0 \ar[r] & \Tor(G_2,G_1) \ar[r] & G_1 \otimes H_2 \ar[r]^-{\alpha_{31}} \ar[d] & G_1 \otimes F_2 \ar[r]^-{\alpha_{32}} \ar[d] & G_1 \otimes G_2 \ar[r]\ar[d] & 0 \\
&&0&0&0.&
} \end{equation}
In particular, we identify $\Tor(G_1,G_2)$ and $\Tor(G_2,G_1)$ with $\ker(\beta_{13})$ and $\ker(\alpha_{31})$ respectively.
Let $x \in \Tor(G_1,G_2) = \ker(\beta_{13})$.
Let $x_{12} \in H_1 \otimes F_2$ be such that $\alpha_{12}(x_{12}) = x$.
Then
\[
\alpha_{22}\circ \beta_{12}(x_{12}) = \beta_{13} \circ \alpha_{21}(x_{12}) = \beta_{13}(x) = 0,
\]
i.e., $\beta_{12}(x_{12}) \in \ker(\alpha_{22})$.
By exactness of the second row, this means that $\beta_{12}(x_{12}) = \alpha_{21}(x_{21})$ for some unique $x_{21} \in F_1 \otimes H_2$.
Set $y:=\beta_{21}(x_{21})$.
Then
\[
\alpha_{31}(y) = \alpha_{31} \circ \beta_{21}(x_{21}) = \beta_{22} \circ \alpha_{21}(x_{21}) = 0,
\]
i.e., $y \in \ker(\alpha_{31}) = \Tor(G_2,G_1)$.
The element $y$ does not depend on the choice of $x_{12}$, and we have
\[ \eta_{G_1,G_2}(x) = y. \]

\section{The flip map and the K\"unneth formula}
\label{sec:KunnethFlip}

If $A$ is a C*-algebra in the UCT class then Schochet's K\"unneth Theorem \cite{Schochet:KunnethThm} provides an exact sequence for computing $K_*(A\otimes B)$.
We will use the statement and exposition in \cite[Chapter 23]{Blackadar:Kbook} (specifically, \cite[Theorem 23.1.3]{Blackadar:Kbook}); the exact sequence is
\begin{equation}
\label{eq:KunnethFormula}
\xymatrix{
0 \ar[r] & K_*(A) \otimes K_*(B) \ar[r]^-{\alpha_{A,B}} & K_*(A\otimes B) \ar[r]^-{\beta_{A,B}} & \Tor(K_*(A), K_*(B)) \ar[r] & 0,}
\end{equation}
where the maps involved are all natural, with $\alpha_{A,B}$ preserving the $\Z_2$-grading and $\beta_{A,B}$ reversing the $\Z_2$-grading.

In this section we prove the following.

\begin{lemma}
\label{lem:KunnethFlip}
The following commutes.
\begin{equation}
\label{eq:KunnethFlipDiag}
\xymatrix{
0 \ar[r] & K_*(A) \otimes K_*(B) \ar[r]^-{\alpha_{A,B}}\ar[d]_-{\sigma_{K_*(A),K_*(B)}} & K_*(A\otimes B) \ar[r]^-{\beta_{A,B}}\ar[d]_-{K_*(\sigma_{A,B})} & \Tor(K_*(A), K_*(B)) \ar[r]\ar[d]_-{\eta_{K_*(A),K_*(B)}} & 0 \\
0 \ar[r] & K_*(B) \otimes K_*(A) \ar[r]^-{\alpha_{B,A}} & K_*(B\otimes A) \ar[r]^-{\beta_{B,A}} & \Tor(K_*(B), K_*(A)) \ar[r] & 0.
} \end{equation}
\end{lemma}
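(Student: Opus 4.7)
The plan is to handle the two squares separately. For the left square, the map $\alpha_{A,B}$ is the K-theoretic external product, which on elementary tensors satisfies $\alpha_{A,B}([p] \otimes [q]) = [p \otimes q]$. Applying $K_*(\sigma_{A,B})$ sends this to $[q \otimes p] = \alpha_{B,A}([q] \otimes [p]) = \alpha_{B,A}(\sigma_{K_*(A),K_*(B)}([p] \otimes [q]))$. By additivity, the left square commutes on all of $K_*(A) \otimes K_*(B)$.

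For the right square, my plan is to use geometric free resolutions in the sense of Rosenberg--Schochet. Since $A$ is in the UCT class, there is a short exact sequence of C*-algebras $0 \to N_A \to M_A \to A \to 0$ (in the UCT class) with $K_*(N_A)$ and $K_*(M_A)$ free abelian, arranged so that the induced six-term K-theory sequence collapses to a free resolution $0 \to K_*(N_A) \to K_*(M_A) \to K_*(A) \to 0$ (modulo a parity shift from the suspension). Choose a similar resolution $0 \to N_B \to M_B \to B \to 0$ of $B$. Tensoring the $A$-resolution with $B$ gives a short exact sequence whose six-term sequence in K-theory, combined with the Künneth identification $K_*(N_A \otimes B) \cong K_*(N_A) \otimes K_*(B)$ (valid because $K_*(N_A)$ is free), realizes $\beta_{A,B}$ as a connecting homomorphism. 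Analogously, tensoring the $B$-resolution with $A$ realizes $\beta_{B,A}$.

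To compare the two, I would tensor the two resolutions with each other to obtain a $3 \times 3$ bicomplex of C*-algebras with exact rows and columns. Applying $K_*$ and using Künneth at the corners involving only free K-theory (the products of the $M$'s and $N$'s), one recovers a bicomplex of abelian groups that coincides with the Tor double complex \eqref{eq:etaDoubleComplex} used to define $\eta_{K_*(A),K_*(B)}$. The C*-algebraic flip $\sigma_{A,B}$ transposes this bicomplex, swapping horizontal and vertical directions, and under the $K_*$-identification becomes exactly the transposition that defines $\eta_{K_*(A),K_*(B)}$ in Section \ref{sec:TorFlipMap}. A diagram chase of a class $x \in K_*(A \otimes B)$ through the bicomplex, carried out in the two possible orders, then yields the commutativity of the right square. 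The main obstacle will be matching the boundary-map recipe computing $\beta_{A,B}$ (via the six-term sequence in K-theory) to the explicit recipe for $\eta_{K_*(A),K_*(B)}$ in \eqref{eq:etaDoubleComplex}, with careful bookkeeping of the parity shifts from the suspensions and of any signs entering the Künneth identifications at the free corners.
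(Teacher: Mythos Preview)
Your plan is essentially the paper's strategy: resolve $A$ and $B$ geometrically, realize $\beta_{A,B}$ as a boundary map coming from the resolution, and compare with the algebraic recipe for $\eta$ via a diagram chase in the resulting bicomplex. The left square is handled identically.

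There is, however, a genuine technical point you are gliding over in the right square. Your description ``tensor the two resolutions to obtain a $3\times 3$ bicomplex of C*-algebras, apply $K_*$, and recover the Tor double complex \eqref{eq:etaDoubleComplex}'' is too optimistic. Only at the four corners built from $N_A,M_A,N_B,M_B$ does the K\"unneth map $\alpha$ give an isomorphism; at the entries involving $A$ or $B$ themselves you do \emph{not} get the abelian-group complex on the nose---the comparison with \eqref{eq:etaDoubleComplex} goes through six-term boundary maps, and the two descriptions of $\beta$ (one from resolving $A$, one from resolving $B$) live in different K-groups. The paper handles this by passing to the auxiliary algebra $SA\otimes C_B + C_A\otimes SB$ (the sum of the two ``mixed'' ideals inside $C_A\otimes C_B$), pushing both computations into $K_*(SA\otimes C_B + C_A\otimes SB)$ via the inclusions $\iota_1,\iota_2$, and checking that they agree there. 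The chase only concludes because of Lemma~\ref{lem:iotaKinj}, which shows $(\iota_1)_*$ is injective; this step is not formal and uses that the relevant boundary map $K_*(F_A\otimes SB)\to K_*(SA\otimes C_B)$ vanishes (a consequence of freeness of $K_*(F_A)$ together with \eqref{eq:ResolutionK}). Your outline does not anticipate the need for this injectivity result, and without it the two diagram chases live in different groups and cannot be compared directly. You should expect to prove an analogue of Lemma~\ref{lem:iotaKinj} for whatever resolutions you choose; the sign/parity bookkeeping you mention is real but secondary to this point.
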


For topological K-theory (equivalently, C*-algebra K-theory restricted to the class of commutative C*-algebras), a K\"unneth spectral sequence argument could be used to prove the above lemma, although the author was unable to find a precise reference for such an argument.
To the author's knowledge, the literature does not contain a proof of the above lemma for topological K-theory which extends easily to the case of C*-algebra K-theory.

That the left square in \eqref{eq:KunnethFlipDiag} commutes is well-known and not difficult.
Although commutativitiy of the right square seems quite natural, it is not trivial to show and the proof requires some setup.
Note that there are (at least) two natural isomorphisms $\Tor(G,H) \to \Tor(H,G)$, namely $\eta_{G,H}$ and $-\eta_{G,H}$; although one expects that one of these should fit into the commuting diagram \eqref{eq:KunnethFlipDiag}, it is not a priori obvious which one.

The following setup comes from \cite[Section 23.5]{Blackadar:Kbook}.
Since $K_*(A)$ is naturally isomorphic to $K_*(S^2A \otimes \mathcal K)$, we may assume (when proving Lemma \ref{lem:KunnethFlip}) that $A$ is of the form $S^2A' \otimes \mathcal K$ for some C*-algebra $A'$, and likewise for $B$.
Under this assumption, by \cite[Proposition 23.5.1]{Blackadar:Kbook}, there exists a separable commutative C*-algebra $F_A$, whose spectrum consists of disjoint union of lines and planes, and a homomorphism $\phi_A:F_A \to B$ giving rise to a surjective map $K_*(F_A) \to K_*(A)$.
With $C_A$ the mapping cone of this homomorphism, i.e.,
\[ C_A := \{(f,g) \in F_A \oplus C_0((0,1],A) \mid \phi(f)=g(1)\}, \]
we obtain an exact sequence
\[ \xymatrix{
0 \ar[r] & SA \ar[r]^-{\mu_A} & C_A \ar[r]^-{\nu_A} &F_A \ar[r] & 0,
} \]
whose 6-term exact sequence in K-theory becomes two short exact sequences,
\begin{equation}
\label{eq:ResolutionK}
 \xymatrix{
0 \ar[r] & K_*(C_A) \ar[r] & K_*(F_A) \ar[r]^-{\partial} &K_*(SA) \ar[r] & 0
} \end{equation}

Since $K_*(A) \cong K_*(SA)$ (by an isomorphism that reverses the grading), we may identify $\Tor(K_*(A),K_*(B))$ with $\Tor(K_*(SA),K_*(SB))$.
In turn, we identify $\Tor(K_*(SA),K_*(SB))$ with the kernel of 
\[ (\nu_A)_* \otimes 1_{K_*(SB)}:K_*(C_A) \otimes K_*(SB) \to K_*(F_A) \otimes K_*(SB); \]
since $K_*(F_A)$ and $K_*(C_A)$ are free abelian, we have by the K\"unneth formula \eqref{eq:KunnethFormula}, a commuting diagram as follows:
\[ \xymatrix{
K_*(C_A) \otimes K_*(SB) \ar[r]^-{\cong} \ar[d]_-{(\nu_A)_* \otimes 1_{K_*(SB)}} &
K_*(C_A\otimes SB) \ar[d]^-{(\nu_A \otimes \id_{SB})_*} \\
K_*(F_A) \otimes K_*(SB) \ar[r]^-{\cong} & K_*(F_A \otimes SB).
} \]
Thus, we actually identify $\Tor(K_*(A),K_*(B))$ with the kernel of $(\nu_{A} \otimes \id_{SB})_*$.
Under this identification, $\beta_{A,B}:K_*(A\otimes B) \cong K_*(SA \otimes SB) \to \Tor(K_*(A),K_*(B))$ is precisely the map 
\[ (\id_{SA} \otimes \mu_B)_*:K_*(SA \otimes SB) \to K_*(SA \otimes C_B), \]
(by a 6-term exact sequence, the image of this map is indeed contained in the kernel of $(\id_{SA} \otimes \nu_B)_*$) (see the proof of \cite[Proposition 23.6.1]{Blackadar:Kbook}).

By the same construction with $B$ in place of $A$, obtain $F_B$, $C_B$, $ \mu_B$, and $ \nu_B$.

\begin{lemma}
\label{lem:iotaKinj}
Let $C_A,C_B$ be as described above.
Define $\iota_1:SA \otimes C_B \to SA \otimes C_B + C_A \otimes SB$ to be the inclusion.
Then $(\iota_1)_*:K_*(SA \otimes C_B) \to K_*(SA \otimes C_B + C_A \otimes SB)$ is injective.
\end{lemma}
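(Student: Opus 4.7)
The plan is to produce a short exact sequence in which $\iota_1$ appears as the ideal inclusion, and then deduce injectivity of $(\iota_1)_*$ by showing that the associated six-term boundary maps vanish.

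First, I would verify that $SA \otimes C_B$ and $C_A \otimes SB$ are closed ideals of $C_A \otimes C_B$ with intersection $SA \otimes SB$. Using exactness of the minimal tensor product (guaranteed by nuclearity), the second isomorphism theorem identifies
\[ \frac{SA \otimes C_B + C_A \otimes SB}{SA \otimes C_B} \;\cong\; \frac{C_A \otimes SB}{SA \otimes SB} \;\cong\; F_A \otimes SB, \]
yielding the short exact sequence
\[ 0 \to SA \otimes C_B \xrightarrow{\iota_1} SA \otimes C_B + C_A \otimes SB \to F_A \otimes SB \to 0. \]
Injectivity of $(\iota_1)_*$ on both $K_0$ and $K_1$ is equivalent to the vanishing of the two boundary maps $\partial : K_*(F_A \otimes SB) \to K_{*+1}(SA \otimes C_B)$ in the resulting six-term exact sequence.

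Next, I would compare this with the larger short exact sequence
\[ 0 \to SA \otimes C_B \to C_A \otimes C_B \to F_A \otimes C_B \to 0, \]
obtained by tensoring the cone sequence $0 \to SA \to C_A \to F_A \to 0$ on the right with $C_B$. The identity on $SA \otimes C_B$ and the inclusion $\id_{F_A} \otimes \mu_B : F_A \otimes SB \hookrightarrow F_A \otimes C_B$ together define a morphism from the small sequence to the large one. Naturality of the six-term boundary then factors the desired $\partial$ through $(\id_{F_A} \otimes \mu_B)_* : K_*(F_A \otimes SB) \to K_*(F_A \otimes C_B)$.

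Finally, since $K_*(F_A)$ is free abelian, the K\"unneth formula \eqref{eq:KunnethFormula} supplies a natural isomorphism $K_*(F_A \otimes X) \cong K_*(F_A) \otimes K_*(X)$, under which $(\id_{F_A} \otimes \mu_B)_*$ corresponds to $\id_{K_*(F_A)} \otimes (\mu_B)_*$. By \eqref{eq:ResolutionK} applied to $B$, the six-term sequence for $0 \to SB \to C_B \to F_B \to 0$ splits into two short exact sequences, which is exactly the statement $(\mu_B)_* = 0$. Therefore both boundary maps vanish and $(\iota_1)_*$ is injective.

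The only delicate point I foresee is the initial identification of $(SA \otimes C_B + C_A \otimes SB)/(SA \otimes C_B)$ with $F_A \otimes SB$, which hinges on exactness of the minimal tensor product (and hence on nuclearity of the algebras in play). The remaining ingredients---naturality of the six-term boundary and of the K\"unneth map with respect to $\id_{F_A} \otimes \mu_B$---are routine.
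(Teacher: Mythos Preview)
Your proposal is correct and follows essentially the same approach as the paper: set up the short exact sequence with $\iota_1$, reduce injectivity to vanishing of the boundary map, compare via naturality with the sequence $0 \to SA \otimes C_B \to C_A \otimes C_B \to F_A \otimes C_B \to 0$, and conclude from $(\id_{F_A} \otimes \mu_B)_* = 0$ (via K\"unneth and $(\mu_B)_* = 0$). You are slightly more explicit than the paper about why the quotient is $F_A \otimes SB$; note that the needed exactness does not actually require nuclearity of $A$ or $B$, since the mapping cone sequence $0 \to SA \to C_A \to F_A \to 0$ admits a completely positive contractive splitting and hence remains exact under the minimal tensor product with any C*-algebra.
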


\begin{proof}
Consider the following commuting diagram with short exact rows:
\begin{equation}
\label{eq:iotaKinjExactSeqs}
 \xymatrix{
0 \ar[r] & SA \otimes C_B \ar[r]^-{\iota_1} \ar[d]_-{\id} & SA \otimes C_B + C_A \otimes SB \ar[r] \ar[d] & F_A \otimes SB \ar[r] \ar[d]^-{\id_{F_A} \otimes \mu_B} & 0\\
0 \ar[r] & SA \otimes C_B \ar[r]^-{\mu_A \otimes \id_{C_B}} & C_A \otimes C_B \ar[r]^-{\nu_A \otimes \id_{C_B}} & F_A \otimes C_B \ar[r] & 0.
}
\end{equation}
The top row produces the 6-term exact sequence
\[ \xymatrix{
K_0(SA \otimes C_B) \ar[r]^-{(\iota_1)_*} & K_0(SA \otimes C_B + C_A \otimes SB) \ar[r] & K_0(F_A \otimes SB) \ar[d]^-{\partial_0} \\
K_1(F_A \otimes SB) \ar[u]_-{\partial_1} & K_1(SA \otimes C_B + C_A \otimes SB) \ar[l] & K_1(SA \otimes C_B). \ar[l]^-{(\iota_1)_*},
} \]
so that injectivity of $(\iota_1)_*$ is equivalent to $\partial_*:K_*(SA \otimes F_B) \to K_{*}(C_A \otimes SB)$ being the zero map.  

The second row of \eqref{eq:iotaKinjExactSeqs} produces a 6-term exact sequence which, by \eqref{eq:ResolutionK} and the K\"unneth formula \eqref{eq:KunnethFormula} (since $K_*(C_B)$ is a free abelian group), becomes two short exact sequences,
\[ \xymatrix{
0 \ar[r] & K_i(C_A \otimes C_B) \ar[r]^-{(\nu_A \otimes \id_{C_B})_*} & K_i(F_A \otimes C_B) \ar[r]^-{\partial \otimes 1} &K_{1-i}(SA \otimes C_B) \ar[r] & 0,
} \]
$i=0,1$.

By \eqref{eq:iotaKinjExactSeqs} and naturality of the 6-term exact sequence, the following commutes
\begin{equation} 
\label{eq:iotaKinjCommKDiag}
\xymatrix{
K_*(F_A \otimes SB) \ar[r]^-{\partial} \ar[d]_-{(\id_{F_A} \otimes \mu_B)_*} & K_*(SA \otimes C_B) \ar[d]^-{1} \\
K_*(F_A \otimes C_B) \ar[r]^-{\partial \otimes 1} & K_*(SA \otimes C_B).
} \end{equation}

Using \eqref{eq:ResolutionK} (with $B$ in place of $A$), and the K\"unneth formula \eqref{eq:KunnethFormula} (since $K_*(F_A)$ is a free abelian group), the map
\[ (\id_{F_A} \otimes \mu_B)_*:K_*(F_A \otimes SB) \to K_*(F_A \otimes C_B) \]
is zero.
Thus, by \eqref{eq:iotaKinjCommKDiag}, the map $\partial:K_*(F_A \otimes SB) \to K_*(SA \otimes C_B)$ is zero, as required.
\end{proof}

\begin{proof}[Proof of Lemma \ref{lem:KunnethFlip}]
The map $\alpha_{A,B}$ is explicitly described in \cite[Section 23.1]{Blackadar:Kbook}, and it is apparent from this description that $\alpha_{B,A} \circ \sigma_{K_*(A),K_*(B)} = K_*(\sigma_{A,B}) \circ \alpha_{A,B}$, i.e., the first square in \eqref{eq:KunnethFlipDiag} commutes.
Let us move on to the second square.

With $G_1:=K_*(A)$ and $G_2:=K_*(B)$, we use the description of $\eta_{K_*(A),K_*(B)}$ from Section \ref{sec:TorFlipMap}, making use of free resolutions of $K_*(A)$, $K_*(B)$ provided by \eqref{eq:ResolutionK} (we assume that these exist by possibly replacing $A$, $B$ by $S^2A \otimes \mathcal K$, $S^2B \otimes \mathcal K$ respectively).
Note that, since $K_*(F_A)$, $K_*(F_B)$, $K_*(C_A)$, and $K_*(C_B)$ are all free abelian groups, the K\"unneth formula \eqref{eq:KunnethFormula} turns the double complex \eqref{eq:etaDoubleComplex} into
\[
\scriptsize{
 \xymatrix{
&&& \Tor(K_*(A), K_*(B)) \ar[d]  \\
& K_*(C_A \otimes C_B) \ar[r] \ar[d] & K_*(C_A \otimes F_B) \ar[r]^-{1 \otimes \partial} \ar[d]^-{(\nu_A \otimes \id_{F_B})_*} & K_*(C_A \otimes SB) \ar[d] \\
& K_*(F_A \otimes C_B) \ar[r]^-{(\id_{F_A} \otimes \nu_B)_*} \ar[d]^-{\partial \otimes 1} & K_*(F_A \otimes F_B) \ar[r] \ar[d] & K_*(F_A \otimes SB) \ar[d]\\
\Tor(K_*(B),K_*(A)) \ar[r] & K_*(SA \otimes C_B) \ar[r] & K_*(SA \otimes F_B) \ar[r] & K_*(SA) \otimes K_*(SB)\\
}} \]
(for space considerations, the zero terms are omitted).

Now, let $x \in K_*(SA \otimes SB) \cong K_*(A,B)$.
We have $\beta_{A,B}(x)=(\id_{SA} \otimes \nu_B)_*(x) \in \Tor(K_*(A),K_*(B))$.
Using the description of $\eta$ from Section \ref{sec:TorFlipMap}, there exist $x_{12} \in K_*(C_A\otimes F_B)$ and $x_{21} \in K_*(F_A\otimes C_B)$ such that
\begin{align}
\label{eq:KunnethFlipSetup1}
(1 \otimes \partial)(x_{12}) &= \beta_{A,B}(x) = (\id_{SA} \otimes \nu_B)_*(x), \\
\label{eq:KunnethFlipSetup2}
(\nu_A \otimes \id_{F_B})_*(x_{12}) &= (\id_{F_A} \otimes \nu_B)_*(x_{21}), \quad \text{and} \\
\label{eq:KunnethFlipSetup3}
(\partial \otimes 1)(x_{21}) &= \eta_{K_*(A),K_*(B)}(\beta_{A,B}(x)).
\end{align}

Consider the following commuting diagram with short exact rows:
\[ \xymatrix{
0 \ar[r] & SA \otimes C_B \ar[r] \ar[d]_-{\iota_1} & C_A \otimes C_B \ar[r] \ar[d] & F_A \otimes C_B \ar[r] \ar[d]^-{\id_{F_A} \otimes \nu_B} & 0\\
0 \ar[r] & SA \otimes C_B+C_A \otimes SB \ar[r] & C_A \otimes C_B \ar[r] & F_A \otimes F_B \ar[r] & 0.
} \]
By naturality of the 6-term exact sequences, we obtain the following commuting diagram:
\begin{equation}
\label{eq:KunnethFlipCommuting1}
 \xymatrix{
K_*(F_A \otimes C_B) \ar[d]_-{(\id_{F_A} \otimes \nu_B)_*} \ar[r]^-{\partial \otimes 1} & K_*(SA \otimes C_B) \ar[d]^-{(\iota_1)_*} \\
K_*(F_A \otimes F_B) \ar[r]^-{\partial}  & K_*(SA \otimes C_B+C_A\otimes SB).
} \end{equation}
Likewise, the following also commutes
\begin{equation}
\label{eq:KunnethFlipCommuting2}
 \xymatrix{
K_*(C_A \otimes F_B) \ar[d]_-{(\nu_A \otimes \id_{F_B})_*} \ar[r]^-{1\otimes\partial} & K_*(C_A \otimes SB) \ar[d]^-{(\iota_2)_*} \\
K_*(F_A \otimes F_B) \ar[r]^-{\partial}  & K_*(SA \otimes C_B+C_A\otimes SB),
} \end{equation}
where $\iota_2$ denotes the inclusion $C_A \otimes SB \to SA \otimes C_B + C_A \otimes SB$.

Note that
\begin{equation}
\label{eq:KunnethFlipInclusions}
\iota_1 \circ (\nu_A \otimes \id_{SB})=\iota_2\circ (\id_{SA} \otimes \nu_B):SA \otimes SB \to SA \otimes C_B + C_A \otimes SB,
\end{equation}
both maps being equal to the inclusion.

Putting these pieces together, we obtain
\begin{eqnarray*}
(\iota_1)_* \circ \eta_{K_*(A),K_*(B)} \circ \beta_{A,B}(x) &\stackrel{\eqref{eq:KunnethFlipSetup3}}=& (\iota_1)_* \circ (\partial \otimes 1)(x_{21}) \\
&\stackrel{\eqref{eq:KunnethFlipCommuting1}}=& \partial \circ (\id_{F_A} \otimes \nu_B)_*(x_{21}) \\
&\stackrel{\eqref{eq:KunnethFlipSetup2}}=& \partial \circ (\nu_A \otimes \id_{F_B})_*(x_{12}) \\
&\stackrel{\eqref{eq:KunnethFlipCommuting2}}=& (\iota_2)_* \circ (1 \otimes \partial)(x_{12}) \\
&\stackrel{\eqref{eq:KunnethFlipSetup1}}=& (\iota_2)_* \circ (\id_{SA} \otimes \nu_B)_*(x) \\
&\stackrel{\eqref{eq:KunnethFlipInclusions}}=& (\iota_1)_* \circ (\nu_A \otimes \id_{SB})_*(x).
\end{eqnarray*}
By Lemma \ref{lem:iotaKinj}, it follows that
\[ \eta_{K_*(A),K_*(B)} \circ \beta_{A,B}(x) = (\nu_A \circ \id_{SB})_*(x) = \beta_{B,A}\circ (\sigma_{A,B})_*(x), \]
as required.
\end{proof}

\section{Sufficient conditions for approximately inner flip}
\label{sec:Suff}

\begin{prop}
\label{prop:PruferFlip}
Let $n$ be a supernatural number.
Then $\eta_{\Q_n/\Z,\Q_n/\Z}$ is the identity map.
\end{prop}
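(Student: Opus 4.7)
The plan is to reduce to the case of finite cyclic groups via a filtered-colimit argument. Indeed, $\Q_n/\Z$ is the directed union of its finite cyclic subgroups $\tfrac{1}{d}\Z/\Z \cong \Z/d\Z$, indexed by the natural number divisors $d$ of $n$ ordered by divisibility. Since $\Tor(-,-)$ preserves filtered colimits in each variable, and since the diagonal $\{(d,d)\}$ is cofinal in $\{(d,e) : d,e \mid n,\ d,e\in\N\}$, we have
\[ \Tor(\Q_n/\Z,\Q_n/\Z) \cong \varinjlim_{d} \Tor(\Z/d\Z,\Z/d\Z). \]
By naturality of $\eta$ (applied to the inclusions $\Z/d\Z \hookrightarrow \Z/d'\Z$ for $d \mid d'$), the map $\eta_{\Q_n/\Z,\Q_n/\Z}$ is the filtered colimit of the maps $\eta_{\Z/d\Z,\Z/d\Z}$. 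It therefore suffices to verify $\eta_{\Z/d\Z,\Z/d\Z} = \id$ for every $d \in \N$.

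The finite cyclic case is a direct diagram chase using the free resolution $0 \to d\Z \to \Z \to \Z/d\Z \to 0$ in \eqref{eq:etaDoubleComplex} (with $G_1 = G_2 = \Z/d\Z$, so $F_i = \Z$ and $H_i = d\Z$). The group $\Tor(\Z/d\Z,\Z/d\Z) = \ker(\beta_{13}) \subset d\Z \otimes \Z/d\Z$ is cyclic of order $d$, generated by $x = d \otimes [1]$. Lift $x$ to $x_{12} = d \otimes 1 \in d\Z \otimes \Z$; its image $\beta_{12}(x_{12}) = d \otimes 1 = 1 \otimes d \in \Z \otimes \Z$ has unique preimage $x_{21} = 1 \otimes d \in \Z \otimes d\Z$ under $\alpha_{21}$, and applying $\beta_{21}$ yields $\eta(x) = [1] \otimes d$. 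Under the canonical identifications $d\Z \otimes \Z/d\Z \cong \Z/d\Z \cong \Z/d\Z \otimes d\Z$, both $x$ and $\eta(x)$ correspond to the same generator of $\Z/d\Z$, so $\eta_{\Z/d\Z,\Z/d\Z}$ is the identity. (This is essentially the diagram chase sketched in the excised proposition preceding the statement.)

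Combining the two steps, $\eta_{\Q_n/\Z,\Q_n/\Z}$ is a filtered colimit of identity maps, hence is the identity. I expect the main obstacle, though not a serious one, to be purely bookkeeping: one must check that the two realizations of $\Tor$ in \eqref{eq:etaDoubleComplex} (as $\ker(\beta_{13}) \subset H_1\otimes G_2$ on one side, and $\ker(\alpha_{31}) \subset G_1 \otimes H_2$ on the other) are each compatible with the directed colimit in a way intertwined by $\eta$. This is a formal consequence of the naturality of $\eta$ combined with the exactness of filtered colimits, but the bookkeeping must be done carefully so that ``the generator of $\Tor(\Z/d\Z,\Z/d\Z)$'' refers consistently to the same element on both sides of the square.
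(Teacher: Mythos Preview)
Your proof is correct and follows essentially the same route as the paper's: both reduce to the finite cyclic case by exploiting that $\Q_n/\Z$ is the directed union of its finite cyclic subgroups together with naturality of $\eta$, and then carry out the same diagram chase on a cyclic group. The only cosmetic difference is that the paper writes the cyclic subgroup as $\Q_m/\Z$ (for $m\in\N$, so $\Q_m\cong\Z$) and uses the resolution $0\to\Z\to\Q_m\to\Q_m/\Z\to 0$, whereas you write it as $\Z/d\Z$ with the resolution $0\to d\Z\to\Z\to\Z/d\Z\to 0$; these are the same resolution under the isomorphism $\Q_m\cong\Z$ given by multiplication by $m$.
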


\begin{proof}
For each natural number $m|n$, $\Q_m/\Z \subset \Q_n/\Z$, and this inclusion (used twice) produces a commutative diagram
\[ \xymatrix{
\Tor(\Q_m/\Z, \Q_m/\Z) \ar[r]^-{\eta_{\Q_m/\Z,\Q_m/\Z}} \ar[d] & \Tor(\Q_m/\Z, \Q_m/\Z) \ar[d] \\
\Tor(\Q_n/\Z, \Q_n/\Z) \ar[r]^-{\eta_{\Q_n/\Z,\Q_n/\Z}} & \Tor(\Q_n/\Z, \Q_n/\Z). } \] 
We have $\Tor(\Q_m/\Z, \Q_m/\Z) \cong \Q_m/\Z$ and $\Tor(\Q_n/\Z,\Q_n/\Z) \cong \Q_n/\Z$ by \cite[62.J]{Fuchs:book}, and these isomorphisms induce the natural inclusion $\Q_m/\Z \subset \Q_n/\Z$.
Since $\Q_n/\Z$ is the union of such subgroups, it suffices to show that $\eta_{\Q_m/\Z, \Q_m/\Z}$ is the identity map.

For this, note that $\Q_m \cong \Z$, so we have a free resolution
\[ 0 \to \Z \to \Q_m \to \Q_m/\Z \to 0. \]
Set $H:=\Z, F:=\Q_m, G:=\Q_m/\Z$ and refer to the description of $\eta_{\Q_m/\Z,\Q_m/\Z}$ in Section \ref{sec:TorFlipMap}.
Let $x \in \Tor(\Q_m/\Z,\Q_m/\Z)$, which we identify with $\Z \otimes (\Q_m/\Z)$ (because the map $\Z \otimes (\Q_m/\Z) \to \Q_m \otimes (\Q_m/\Z)$ is the zero map); thereby write $x=1 \otimes (k/m + \Z)$ for some $k \in \Z$.
This lifts to $x_{12} = 1 \otimes (k/m) \in \Z \otimes \Q_m$, which is equal to $(k/m) \otimes 1$ in $\Q_m \otimes \Q_m$.
Thus, $x_{21} = (k/m) \otimes 1$, so that
\[ \eta_{\Q_m/\Z,\Q_m/\Z}(x) = (k/m + \Z) \otimes 1. \]
This establishes that $\eta_{\Q_m/\Z,\Q_m/\Z}$ is the identity map.
\end{proof}

\begin{thm}
\label{thm:KKsuff}
Let $A$ be a separable $C^*$-algebra.
Suppose that $K_*(A)$ is one of the following groups (ignoring the grading):
\begin{enumerate}
\item $0$;
\item $\Z$;
\item $\Q_n$, where $n$ is a supernatural number of infinite type;
\item $\Q_m/\Z$ where $m$ is a supernatural number of infinite type; or
\item $\Q_n \oplus \Q_m/\Z$, where $n,m$ are supernatural numbers of infinite type, and $m$ divides $n$.
\end{enumerate}
Then the flip map $\sigma_{A,A}:A \otimes A \to A \otimes A$ has the same KK-class as the identity map.
\end{thm}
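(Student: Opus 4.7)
The overall strategy is to reduce the $KK$-theoretic claim to a K-theoretic one via the UCT (implicit in the paper's setting), then verify the K-theoretic equality via the K\"unneth--flip analysis of Section~\ref{sec:KunnethFlip}. The proof thus has two parts: (a) show $K_*(\sigma_{A,A})=K_*(\mathrm{id}_{A\otimes A})$ by direct computation, and (b) show the Ext term of the UCT for $KK(A\otimes A,A\otimes A)$ vanishes, so that the $KK$-class is detected by its action on K-theory.

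For (a), I would compute $K_*(A\otimes A)$ in each case via the K\"unneth sequence~(\ref{eq:KunnethFormula}), using: $\Q_n\otimes\Q_n\cong\Q_n$ with $\Tor(\Q_n,\Q_n)=0$ (torsion-freeness); $\Q_m/\Z\otimes\Q_m/\Z=0$ (divisibility of $\Q_m/\Z$ for $m$ of infinite type) with $\Tor(\Q_m/\Z,\Q_m/\Z)\cong\Q_m/\Z$; and, using $m\mid n$, $\Q_n\otimes(\Q_m/\Z)=\Tor(\Q_n,\Q_m/\Z)=0$. In every case one of the two K\"unneth pieces vanishes or the two pieces sit in different gradings, and $K_*(A\otimes A)\cong K_*(A)$ up to a grading shift. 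By Lemma~\ref{lem:KunnethFlip}, the action of $K_*(\sigma_{A,A})$ on the $\otimes$-part is $\sigma_{K_*(A),K_*(A)}$, and on the Tor-part is $\eta_{K_*(A),K_*(A)}$. The flip is the identity on each surviving factor ($\Z\otimes\Z\cong\Z$ and $\Q_n\otimes\Q_n\cong\Q_n$ via $a\otimes b\mapsto ab$), and $\eta$ is the identity on the only nonzero Tor, $\Tor(\Q_m/\Z,\Q_m/\Z)$, by Proposition~\ref{prop:PruferFlip}. Since $\mathrm{Hom}(\text{Tor-quotient},\otimes\text{-kernel})=0$ in each case (one side is zero, or one side is torsion and the other torsion-free), an endomorphism of $K_*(A\otimes A)$ agreeing with the identity on both pieces of the K\"unneth sequence must itself be the identity, giving $K_*(\sigma_{A,A})=K_*(\mathrm{id}_{A\otimes A})$.

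For (b), the UCT sequence
\[0\to\Ext\bigl(K_*(A\otimes A),K_{*+1}(A\otimes A)\bigr)\to KK(A\otimes A,A\otimes A)\to \mathrm{Hom}\bigl(K_*(A\otimes A),K_*(A\otimes A)\bigr)\to 0\]
places $[\sigma_{A,A}]-[\mathrm{id}_{A\otimes A}]$ in the Ext term. In cases (i)--(iv), $K_*(A\otimes A)$ is concentrated in a single grading, so the graded Ext (with its opposite-grading shift) is automatically zero. In case (v), $K_*(A\otimes A)\cong\Q_n\oplus\Q_m/\Z$ with the summands in opposite gradings, and one checks $\Ext(\Q_n,\Q_m/\Z)=0$ (since $\Q_m/\Z$ is divisible, hence injective as a $\Z$-module) and $\Ext(\Q_m/\Z,\Q_n)\cong\mathrm{Hom}(\Q_m/\Z,\Q/\Q_n)=0$ via the Pr\"ufer decomposition $\Q/\Q_n\cong\bigoplus_{p\nmid n}\Q_{p^\infty}/\Z$, which has no primes in common with $\Q_m/\Z\cong\bigoplus_{p\mid m}\Q_{p^\infty}/\Z$ because $m\mid n$.

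The main obstacle is part (b) in case~(v): the vanishing of $\Ext(\Q_m/\Z,\Q_n)$ rests on combining the injectivity of divisible groups with the Pr\"ufer-primary decomposition of $\Q/\Q_n$, using the divisibility relation $m\mid n$ to conclude by disjointness of prime supports. By contrast, part~(a) is essentially mechanical once Lemma~\ref{lem:KunnethFlip} and Proposition~\ref{prop:PruferFlip} are in hand.
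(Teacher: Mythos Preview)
Your proposal is correct and follows essentially the same two-step strategy as the paper: verify that $K_*(\sigma_{A,A})=\mathrm{id}$ via Lemma~\ref{lem:KunnethFlip} and Proposition~\ref{prop:PruferFlip}, then kill the Ext obstruction in the UCT. The only noteworthy difference is in case~(v): the paper invokes Lemma~\ref{lem:DivSplit} (using that $\Q_n$ is $m$-divisible) to conclude $\Ext(\Q_m/\Z,\Q_n)=0$, whereas you compute it directly from the injective resolution $0\to\Q_n\to\Q\to\Q/\Q_n\to0$ and the Pr\"ufer decomposition, which is an equally valid and arguably more transparent route; your extra care about the extension ambiguity in the K\"unneth sequence (the $\mathrm{Hom}(\text{Tor-part},\otimes\text{-part})=0$ remark) is correct but in fact unnecessary here, since in every case the two pieces land in distinct gradings so the sequence degenerates.
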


\begin{proof}
Using the K\"unneth formula \eqref{eq:KunnethFormula}, one computes $K_*(A \otimes A)$:
\begin{enumerate}
\item $K_*(A \otimes A)=0$;
\item $K_0(A\otimes A)=\Z$ and $K_1(A\otimes A)=0$;
\item $K_0(A \otimes A)=\Q_n$ and $K_1(A\otimes A)=0$;
\item $K_0(A\otimes A)=0$ and $K_1(A\otimes A)=\Q_m/\Z$;
\item $K_0(A\otimes A)=\Q_n$ and $K_1(A\otimes A)=\Q_m/\Z$.
\end{enumerate}
By Lemma \ref{lem:KunnethFlip} and (for cases (iv) and (v)) Proposition \ref{prop:PruferFlip}, the flip map $\sigma_{A,A}$ acts as the identity on $K_*(A\otimes A)$.

In cases (i)-(iv), $\Ext(K_i(A \otimes A), K_{1-i}(A \otimes A))=0$ for trivial reasons, for $i=0,1$.
In case (v), $\Ext(K_i(A\otimes A), K_{1-i}(A\otimes A))=0$ by Lemma \ref{lem:DivSplit}.
Thus, by the UCT \cite[Theorem 23.1.1]{Blackadar:Kbook}, it follows that $\sigma_{A,A}$ agrees with the identity in KK.
\end{proof}

Using classification, we obtain the following, 

\begin{cor}
\label{cor:Suff}
Let $m_0,m_1,n$ be supernatural numbers of infinite type, such that $m_0,m_1$ are coprime and $m_0m_1$ divides $n$.
Then $\mathcal E_{n,m_0,m_1}$ and $\mathcal F_{m_0,m_1}$ have asymptotically inner flip.
\end{cor}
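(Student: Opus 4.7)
The plan is to reduce the corollary to Theorem \ref{thm:KKsuff} together with classification-theoretic uniqueness. First, I would check that $A = \mathcal E_{n,m_0,m_1}$ and $A = \mathcal F_{m_0,m_1}$ satisfy the hypotheses of Theorem \ref{thm:KKsuff}. Since $m_0$ and $m_1$ are coprime supernatural numbers of infinite type, the primary decomposition of Pr\"ufer groups gives $\Q_{m_0}/\Z \oplus \Q_{m_1}/\Z \cong \Q_{m_0m_1}/\Z$, where $m_0m_1$ is again of infinite type. Hence $K_*(\mathcal F_{m_0,m_1}) \cong \Q_{m_0m_1}/\Z$, which is case (iv), and $K_*(\mathcal E_{n,m_0,m_1}) \cong \Q_n \oplus \Q_{m_0m_1}/\Z$, which is case (v) because $m_0m_1 \mid n$. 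Theorem \ref{thm:KKsuff} then yields
\[ [\sigma_{A,A}] = [\id_{A \otimes A}] \quad \text{in } KK(A \otimes A, A \otimes A). \]

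Next, I would observe that $A \otimes A$ itself lies in the classifiable class described in Remark \ref{rmk:MainThmClass}. For $A = \mathcal F_{m_0,m_1}$, the tensor product of two unital UCT Kirchberg algebras is again a unital UCT Kirchberg algebra. For $A = \mathcal E_{n,m_0,m_1}$, the tensor product is simple, separable, unital, $\mathcal Z$-stable (as $\mathcal Z$-stability is preserved under tensor products), quasidiagonal (since the tensor factors are), and has a unique trace (by nuclearity and uniqueness on each factor); it remains in the UCT class. In both settings, then, $A \otimes A$ is classifiable, and both $\sigma_{A,A}$ and $\id_{A \otimes A}$ are unital $*$-automorphisms inducing the same class in $KK$; in the stably finite case, both additionally fix the unique tracial state of $A \otimes A$.

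I would then invoke the classification uniqueness theorems applied to these two endomorphisms. In the purely infinite case, the Kirchberg--Phillips uniqueness theorem for unital $*$-homomorphisms between unital Kirchberg algebras in the UCT class, combined with the refinement to asymptotic unitary equivalence (in the spirit of Dadarlat--Winter \cite{DadarlatWinter:KKssa}, exploiting $\mathcal O_\infty$-absorption), yields a continuous path of unitaries $u_t$ in $A \otimes A$ implementing $\sigma_{A,A}$ asymptotically. In the stably finite case, the analogous uniqueness for unital $*$-homomorphisms between simple, unital, $\mathcal Z$-stable, quasidiagonal, UCT C*-algebras with unique trace (provided by Matui--Sato \cite{MatuiSato:dr} together with Gong--Lin--Niu \cite{GongLinNiu}) yields approximate unitary equivalence, which can then be promoted to asymptotic unitary equivalence via a standard interval-algebra argument using $\mathcal Z$-absorption of $A \otimes A$.

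The step I expect to be the main obstacle is this last upgrade from approximate to asymptotic unitary equivalence. The classical classification theorems most naturally produce approximate unitary equivalence, and genuinely continuous paths of unitaries require an extra ingredient. In the Kirchberg setting this is handled by tensoring paths with $\mathcal O_\infty$-absorbing tricks; in the $\mathcal Z$-stable stably finite setting one uses that $\mathcal Z$-absorption allows one to glue approximately unitarily equivalent pictures along $[0,1]$ into a norm-continuous unitary path. All remaining bookkeeping---checking that the uniqueness hypotheses (trace action, $KK$-class, unitality) genuinely match for $\sigma_{A,A}$ and $\id_{A \otimes A}$---is routine given the first paragraph.
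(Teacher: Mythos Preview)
Your overall architecture matches the paper's: verify that $K_*(A)$ falls into the list of Theorem \ref{thm:KKsuff}, deduce $[\sigma_{A,A}]=[\id_{A\otimes A}]$ in $KK$, then appeal to classification uniqueness. For $\mathcal F_{m_0,m_1}$ this is exactly what the paper does, invoking Kirchberg--Phillips in the form \cite[Theorem 8.3.3 (iii)]{Rordam:ClassBook}, which already gives asymptotic unitary equivalence directly.

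The gap is in your treatment of $\mathcal E_{n,m_0,m_1}$. You propose to obtain \emph{approximate} unitary equivalence from \cite{MatuiSato:dr,GongLinNiu} and then ``promote'' it to asymptotic unitary equivalence by a ``standard interval-algebra argument using $\mathcal Z$-absorption.'' That promotion is not standard and is not free: the obstruction to upgrading approximate to asymptotic unitary equivalence between automorphisms of a simple tracial-rank-zero algebra is precisely the \emph{rotation map} $\tilde\eta_{\sigma_{A,A},\id_{A\otimes A}}:K_1(A\otimes A)\to \mathrm{Aff}(T(A\otimes A))$, and $\mathcal Z$-stability alone does not kill it. The paper handles this by citing Lin's asymptotic uniqueness theorem \cite[Theorem 10.7]{Lin:AsympEquiv} directly, after first showing (via \cite{ElliottGong:RR0II,Lin:AHRR0}) that $A$, and hence $A\otimes A$, has tracial rank zero. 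The three hypotheses of Lin's theorem are then: same $KK$-class (done), same trace action (automatic, unique trace), and vanishing rotation map. The last is the point you are missing: it holds here for the specific reason that $K_1(A\otimes A)$ is a \emph{torsion} group (it sits in $\Q_m/\Z$), so any homomorphism into the torsion-free group $\mathrm{Aff}(T(A\otimes A))\cong\R$ is zero. Your write-up should make this torsion argument explicit rather than appealing to a gluing procedure that does not exist in the generality you suggest.

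One further wrinkle: the tracial-rank-zero route requires $n\neq 1$ (so that $K_0(A)=\Q_n$ is dense in $\R$ and $A$ has real rank zero). The paper treats $n=1$, i.e.\ $A\cong\mathcal Z$, separately, quoting \cite[Theorem 2.2]{DadarlatWinter:KKssa}. Your proposal does not distinguish this case; you should either do so or check carefully that whatever uniqueness theorem you invoke genuinely applies to $\mathcal Z\otimes\mathcal Z$.
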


\begin{proof}
The algebras $\mathcal E_{n,m_0,m_1}$ and $\mathcal F_{m_0,m_1}$ satisfy the K-theoretic hypotheses of Theorem \ref{thm:KKsuff}, and therefore the flip map has the same KK-class as the identity.
For $\mathcal F_{m_0,m_1}$, it follows from Kirchberg-Phillips classification (see \cite[Theorem 8.2.1 (ii) or 8.3.3 (iii)]{Rordam:ClassBook}) that the flip is asymptotically inner.

For $A:=\mathcal E_{n,m_0,m_1}$ (where $n\neq 1$), first note that $A$ is a simple AH algebra with real rank zero, by \cite[Theorem 4.18]{ElliottGong:RR0II} (see also Remark \ref{rmk:EFdef}); also, the $K_0$-group is unperforated.
Hence by \cite[Theorem 2.1]{Lin:AHRR0}, it has tracial rank zero.
We shall appeal to Lin's result \cite[Theorem 10.7]{Lin:AsympEquiv}, which says that $\sigma_{A,A}$ and $\id_{A \otimes A}$ are asymptotically unitarily equivalent provided that they have the same KK-class, they agree on traces, and the rotation map $\tilde\eta_{\sigma_{A,A},\id_{A \otimes A}}$ vanishes.
The first hypothesis has already been verified; the latter two hypotheses are true for trivial reasons, as follows.
Since $A \otimes A$ has unique trace, all automorphisms must agree on this trace.
The condition $\tilde\eta_{\sigma_{A,A},id_{A \otimes A}}=0$ means that a certain map $K_1(A \otimes A) \to \mathrm{Aff}(T(A \otimes A))$ is the zero map (see \cite[Definition 3.4]{Lin:AsympEquiv}); since $K_1(A \otimes A)$ is a torsion group, this holds automatically.

Finally, in the case $n=1$, $A=\mathcal E_{1,1,1} \cong \mathcal Z$, which has asymptotically inner flip by \cite[Theorem 2.2]{DadarlatWinter:KKssa}.
\end{proof}

\section{Necessary conditions}
\label{sec:Necessary}

In this section, we shall prove (ii) $\Rightarrow$ (iv) of Theorem \ref{thm:MainThm}.
In fact, this implication is proven in potentially weaker generality (not assuming the conditions of strict comparison and infinite or quasidiagonal), as follows:

\begin{thm}
\label{thm:Necessary}
Let $A$ be a $C^*$-algebra in the UCT class, such that $A \otimes A$ has approximately inner flip.
Then $K_*(A)$ is isomorphic to one of the following groups (ignoring the grading):
\begin{enumerate}
\item $0$;
\item $\Z$;
\item $\Q_n$, where $n$ is a supernatural number of infinite type;
\item $\Q_m/\Z$ where $m$ is a supernatural number of infinite type; or
\item $\Q_n \oplus \Q_m/\Z$, where $n,m$ are supernatural numbers of infinite type, and $m$ divides $n$.
\end{enumerate}
\end{thm}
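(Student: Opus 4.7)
The plan is to convert the hypothesis on $A \otimes A$ into K-theoretic constraints on $G := K_*(A)$ via Section \ref{sec:KunnethFlip}, then classify the resulting abelian groups. Set $B := A \otimes A$ and $H := K_*(B)$. Since approximately inner automorphisms act as the identity on K-theory, the hypothesis yields $K_*(\sigma_{B,B}) = \id$ on $K_*(B \otimes B)$. By Lemma \ref{lem:KunnethFlip} applied to $B$, this forces both $\sigma_{H, H} = \id$ on $H \otimes H$ and $\eta_{H, H} = \id$ on $\Tor(H, H)$.

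I next analyze which abelian groups $H$ satisfy both conditions. Rationally, $\sigma = \id$ on $H \otimes H$ forces the torsion-free rank of $H$ to be at most $1$ (the flip on $V \otimes V$ fixes only the symmetric subspace for any $\Q$-vector space $V$). For each prime $p$, the $p$-socle $H[p]$ must have dimension at most $1$ over $\Z/p$: if $K := (\Z/p)^2 \subseteq H$, then since $\mathrm{Tor}_2 = 0$ for abelian groups the inclusion $K \hookrightarrow H$ induces an injection $\Tor(K, K) \hookrightarrow \Tor(H, H)$; by naturality $\eta_{H, H}$ restricts to $\eta_{K, K}$ on this subgroup, but $\eta_{K, K}$ nontrivially swaps the two off-diagonal $\Tor(\Z/p, \Z/p)$ summands of $\Tor(K, K) \cong (\Z/p)^4$, contradicting $\eta_{H, H} = \id$. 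By the standard structure theorem for abelian $p$-groups with cyclic socle (every such group embeds in its injective hull $\Z/p^\infty$), each $p$-primary component $H_p$ is thus $0$, cyclic $\Z/p^k$, or Pr\"ufer $\Z/p^\infty$.

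Now I transfer to $G$ via the K\"unneth sequence for $A \otimes A$, which splits unnaturally (Rosenberg--Schochet), giving $H \cong G \otimes G \oplus \Tor(G, G)$ as an abstract abelian group. The rank bound gives $\rank(G)^2 = \rank(G \otimes G) \leq \rank(H) \leq 1$, so $\rank(G) \leq 1$. For the torsion, $H_p \cong (G \otimes G)_p \oplus \Tor(G_p, G_p)$; if $G_p \cong \Z/p^k$ for some finite $k \geq 1$, both summands contribute $\Z/p^k$, so $H_p$ contains $(\Z/p^k)^2$, which is neither cyclic nor Pr\"ufer---a contradiction. Hence $G_p \in \{0, \Z/p^\infty\}$, and the torsion subgroup of $G$ is $\Q_m/\Z$ for some supernatural $m$ of infinite type.

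Since $\Q_m/\Z$ is divisible, hence injective, $G \cong \Q_m/\Z \oplus G_0$ with $G_0$ torsion-free of rank at most $1$, so $G_0 \in \{0, \Z, \Q_n\}$. For the compatibility $m \mid n$: if $p \mid m$ but $\Q_n$ is not $p$-divisible, then the cross-term $G_0 \otimes (\Q_m/\Z)_p \cong \Z/p^\infty$ provides a second Pr\"ufer summand in $(G \otimes G)_p$ beyond $\Tor(G_p, G_p) = \Z/p^\infty$, again violating the cyclic/Pr\"ufer conclusion for $H_p$. Thus $p \mid n$ for every $p \mid m$, i.e., $m \mid n$ (and $n$ may be taken of infinite type without changing $\Q_n$ up to isomorphism). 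This exhausts the cases (i)--(v). The main technical point is the structural analysis of $H$ combining both the $\sigma$- and $\eta$-triviality conditions (one alone is insufficient, as $(\Z/p^\infty)^2$ shows for $\sigma$ and $\Z/p^{2k}$ shows for $\eta$); the step ruling out finite cyclic $p$-torsion in $G$ relies crucially on the unnatural splitting of the K\"unneth sequence, without which a nontrivially extended $H_p$ could absorb the two $\Z/p^k$ summands.
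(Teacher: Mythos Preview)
Your route is genuinely different from the paper's. The paper extracts $p$-local information by tensoring $A$ with auxiliary C*-algebras (the universal UHF $\mathcal Q$ for the rank bound, and $\mathcal F_{1,p^\infty}$---whose approximately inner flip comes from Corollary~\ref{cor:Suff}---to isolate the $p$-primary torsion), then invokes Lemma~\ref{lem:NoCycSummand}. You instead work entirely with $H = K_*(A \otimes A)$, using the unnatural splitting of the K\"unneth sequence to pass the cocyclicity constraint on each $H_p$ back to $G$; this is more self-contained and avoids the forward reference to the sufficiency direction. One small omission: before case-splitting on $G_p \in \{0,\Z/p^k,\Z/p^\infty\}$ you should observe that $G_p$ is itself cocyclic, since $\Tor(G_p,G_p)$ is a summand of the cocyclic group $H_p$ and its $p$-socle has rank at least $(\dim G_p[p])^2$.

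There is, however, a genuine gap at the end. The claim that ``$n$ may be taken of infinite type without changing $\Q_n$ up to isomorphism'' is false: rank-one torsion-free groups are classified by type, and for $n = \prod_p p$ (each prime to the first power) the type $[(1,1,1,\dots)]$ of $\Q_n$ is not equivalent to any height sequence with values in $\{0,\infty\}$, so this $\Q_n$ is isomorphic neither to $\Z$ nor to any $\Q_{n'}$ with $n'$ of infinite type. Your argument correctly shows that $G_0$ must be $p$-divisible (i.e.\ $p^\infty \mid n$) for each prime $p \mid m$, but places no restriction on the exponent of primes outside $m$; in particular it does not exclude $G \cong \Q_{\prod_p p}$. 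Nor can it: if $A$ is a unital UCT Kirchberg algebra with $K_0(A) = \Q_{\prod_p p}$ and $K_1(A) = 0$, then $K_*(A^{\otimes 4})$ is rank one and torsion free, the flip acts trivially on it, the relevant $\Ext$ groups vanish, and hence by Kirchberg--Phillips $A \otimes A$ has approximately inner flip---yet $K_*(A)$ is not on the list. The paper's own proof makes the same unjustified leap at the start of its step~(ii), so this appears to be an oversight in the statement of the theorem rather than a defect peculiar to your method.
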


This result is derived from the following, a simple consequence of Lemma \ref{lem:KunnethFlip}.

\begin{lemma}
\label{lem:BasicRestrictions}
Let $A$ be a C*-algebra in the UCT class with approximately inner flip.
If $K_*(A)$ contains direct summands $G_1$ and $G_2$ (irrespective of the grading) then:
\begin{enumerate}
\item $G_1 \otimes G_2 = 0$; and
\item $\Tor(G_1,G_2)=0$.
\end{enumerate}
\end{lemma}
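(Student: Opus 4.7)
The plan is to derive the lemma as a clean corollary of Lemma \ref{lem:KunnethFlip}. Since $\sigma_{A,A}$ is approximately inner, $K_*(\sigma_{A,A}) = \mathrm{id}_{K_*(A \otimes A)}$. Plugging this into the commuting diagram \eqref{eq:KunnethFlipDiag} with $B := A$, the injectivity of $\alpha_{A,A}$ forces
\[ \sigma_{K_*(A), K_*(A)} = \mathrm{id} \quad \text{on } K_*(A) \otimes K_*(A), \]
while the surjectivity of $\beta_{A,A}$ forces
\[ \eta_{K_*(A), K_*(A)} = \mathrm{id} \quad \text{on } \Tor(K_*(A), K_*(A)). \]

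Next, I would exploit bifunctoriality. Given a decomposition $K_*(A) = G_1 \oplus G_2 \oplus H$ (for any complement $H$), both $\otimes$ and $\Tor$ distribute over direct sums, yielding
\[ K_*(A) \otimes K_*(A) \;\supset\; (G_1 \otimes G_2) \oplus (G_2 \otimes G_1), \]
and analogously for $\Tor$, as direct summands. Naturality of $\sigma_{G,H}$ and of $\eta_{G,H}$ with respect to the inclusions and projections of the decomposition implies that $\sigma_{K_*(A), K_*(A)}$ carries the summand $G_1 \otimes G_2$ isomorphically onto the summand $G_2 \otimes G_1$ via the canonical flip, and $\eta_{K_*(A), K_*(A)}$ does the same between the two $\Tor$ summands.

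Therefore, for the identity map to agree with $\sigma_{K_*(A), K_*(A)}$, every element of the summand $G_1 \otimes G_2$ must simultaneously lie in the disjoint summand $G_2 \otimes G_1$; this forces $G_1 \otimes G_2 = 0$. The same argument applied with $\eta$ in place of $\sigma$ gives $\Tor(G_1, G_2) = 0$. There is no substantive obstacle here: the result is essentially a bookkeeping consequence of Lemma \ref{lem:KunnethFlip}. The only point requiring mild care is that $G_1 \otimes G_2$ and $G_2 \otimes G_1$ should be treated as formally distinct (external) direct summands of $K_*(A) \otimes K_*(A)$ even when $G_1 = G_2$ as subgroups of $K_*(A)$; this is automatic from the labelled internal direct sum decomposition.
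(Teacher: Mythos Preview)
Your proposal is correct and follows essentially the same route as the paper's proof: deduce from Lemma~\ref{lem:KunnethFlip} that $\sigma_{K_*(A),K_*(A)}$ and $\eta_{K_*(A),K_*(A)}$ are the identity, then observe that under the decomposition $K_*(A)=G_1\oplus G_2\oplus H$ the flip swaps the summands $G_1\otimes G_2$ and $G_2\otimes G_1$ (and similarly for $\Tor$), forcing both to vanish. Your remark about injectivity of $\alpha_{A,A}$ and surjectivity of $\beta_{A,A}$ makes explicit what the paper leaves implicit, but otherwise the arguments coincide.
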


\begin{proof}
Since the flip $\sigma_{A,A}:A\otimes A \to A\otimes A$ is approximately inner, it must agree with the identity map on K-theory.
By Lemma \ref{lem:KunnethFlip}, this implies that $\sigma_{K_*(A),K_*(A)}$ and $\eta_{K_*(A),K_*(A)}$ are both the identity map.

(i):
Let $K_*(A)=G_1 \oplus G_2 \oplus G_3$.
We have
\[ K_*(A) \otimes K_*(A) = (G_1 \otimes G_1) \oplus (G_1 \otimes G_2) \oplus (G_2 \otimes G_1) \oplus (G_2 \otimes G_2) \oplus H, \]
(where $H$ involves $G_3$) and $\sigma_{K_*(A),K_*(A)}$ sends $G_1 \otimes G_2$ to $G_2 \otimes G_1$ (by the flip isomorphism).
Therefore, if $G_1 \otimes G_2 \neq 0$, then $\sigma_{K_*(A),K_*(A)}$ cannot be the identity map, which is a contradiction.

(ii) is essentially the same argument: we have
\begin{align*}
 \Tor(K_*(A),K_*(A)) =  &\Tor(G_1,G_1) \oplus \Tor(G_1,G_2) \oplus \Tor(G_2,G_1) \\
&\qquad \oplus \Tor(G_2,G_2) \oplus H',
\end{align*}
and $\eta_{K_*(A),K_*(A)}$ sends $\Tor(G_1,G_2)$ to $\Tor(G_2,G_1)$ (by $\eta_{G_1,G_2}$).
\end{proof}

\begin{lemma}
\label{lem:TensorKComp}
Let $A$ be a $C^*$-algebra in the UCT class which has approximately inner flip.
Then 
\begin{align*}
K_0(A \otimes A) &\cong \big(K_0(A) \otimes K_0(A)\big) \oplus \big(K_1(A) \otimes K_1(A)\big) \quad \text{and} \\
K_1(A \otimes A) &\cong \Tor(K_0(A),K_0(A)) \oplus \Tor(K_1(A),K_1(A)).
\end{align*}
\end{lemma}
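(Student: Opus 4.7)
The plan is to combine the Künneth short exact sequence \eqref{eq:KunnethFormula} with the vanishing already supplied by Lemma \ref{lem:BasicRestrictions}, applied to the decomposition $K_*(A)=K_0(A)\oplus K_1(A)$ as an ungraded abelian group.

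First I would write out the Künneth formula \eqref{eq:KunnethFormula} for $B=A$ and unpack the grading. The group $K_*(A)\otimes K_*(A)$ decomposes as the direct sum of four pieces $K_i(A)\otimes K_j(A)$, with grading $i+j\pmod 2$; similarly $\Tor(K_*(A),K_*(A))$ breaks into four $\Tor(K_i(A),K_j(A))$ pieces, again graded by $i+j\pmod 2$. Since $\alpha_{A,A}$ preserves the grading and $\beta_{A,A}$ reverses it, this gives short exact sequences in each degree:
\[
0 \to \bigoplus_{i+j\equiv 0} K_i(A)\otimes K_j(A) \to K_0(A\otimes A) \to \bigoplus_{i+j\equiv 1}\Tor(K_i(A),K_j(A)) \to 0,
\]
\[
0 \to \bigoplus_{i+j\equiv 1} K_i(A)\otimes K_j(A) \to K_1(A\otimes A) \to \bigoplus_{i+j\equiv 0}\Tor(K_i(A),K_j(A)) \to 0.
\]

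Next I would apply Lemma \ref{lem:BasicRestrictions} with $G_1=K_0(A)$ and $G_2=K_1(A)$, which are direct summands of the ungraded group $K_*(A)$. This gives $K_0(A)\otimes K_1(A)=0$ and $\Tor(K_0(A),K_1(A))=0$, and the same with the factors swapped. Feeding these vanishings into the two short exact sequences above collapses the odd-degree tensor summands and the odd-degree Tor summands, leaving
\[
0 \to (K_0(A)\otimes K_0(A))\oplus (K_1(A)\otimes K_1(A)) \to K_0(A\otimes A) \to 0,
\]
\[
0 \to K_1(A\otimes A) \to \Tor(K_0(A),K_0(A))\oplus \Tor(K_1(A),K_1(A)) \to 0,
\]
from which the claimed isomorphisms follow directly.

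There is essentially no obstacle here: the argument is purely bookkeeping on the Künneth sequence once Lemma \ref{lem:BasicRestrictions} is in hand. The only subtlety is being careful about what is meant by a ``direct summand'' in the hypothesis of Lemma \ref{lem:BasicRestrictions}, since $K_0(A)$ and $K_1(A)$ are summands of $K_*(A)$ only in the ungraded sense, but this is exactly what the parenthetical ``irrespective of the grading'' in that lemma allows.
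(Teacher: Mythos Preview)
Your proposal is correct and follows essentially the same approach as the paper: unpack the graded K\"unneth short exact sequence into its degree-$0$ and degree-$1$ pieces, then apply Lemma~\ref{lem:BasicRestrictions} with $G_1=K_0(A)$ and $G_2=K_1(A)$ to kill the cross terms. The paper's proof is terser but identical in substance.
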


\begin{proof}
For C*-algebras $A,B$, the exact sequence from the K\"unneth formula \eqref{eq:KunnethFormula} for $A \otimes B$ can be expressed as two exact sequences,
\begin{align*}
0 &\to K_0(A) \otimes K_0(B) \oplus K_1(A) \otimes K_1(B) \to K_0(A \otimes B) \\
&\qquad \to \Tor(K_0(A),K_1(B)) \oplus \Tor(K_1(A),K_0(B)) \to 0, \\
0 &\to K_0(A) \otimes K_1(B) \oplus K_0(A) \otimes K_1(B) \to K_1(A \otimes B) \\
&\qquad \to \Tor(K_0(A),K_0(B)) \oplus \Tor(K_1(A),K_1(B)) \to 0.
\end{align*}

By Lemma \ref{lem:BasicRestrictions}, $K_0(A) \otimes K_1(A) = 0$ and $\Tor(K_0(A),K_1(A)) = 0$.
Putting these together yield the result.
\end{proof}

\begin{lemma}
\label{lem:NoCycSummand}
Let $A$ be a C*-algebra in the UCT class and let $G_p$ be a direct summand of $K_*(A)$ which is a nonzero $p$-group for some prime $p$.
Suppose that
\begin{enumerate}
\item $A$ has approximately inner flip; or
\item If $A \otimes A$ has approximately inner flip and $K_*(A)=G_p$.
\end{enumerate}
Then $G_p \cong \Q_{p^\infty}/\Z$.
\end{lemma}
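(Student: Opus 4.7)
My plan is to leverage Lemma~\ref{lem:BasicRestrictions} applied to $A\otimes A$, together with the K\"unneth theorem, to force $G_p$ to be the Pr\"ufer group $\Q_{p^\infty}/\Z$. First I establish that in both cases $A\otimes A$ has approximately inner flip: in case~(ii) this is the hypothesis; in case~(i), if unitaries $(u_n)$ approximately implement $\sigma_{A,A}$, then the flip on $(A\otimes A)\otimes(A\otimes A) = A^{\otimes 4}$, which is the permutation $(1\,3)(2\,4)$, decomposes as a product of four adjacent transpositions, each implemented approximately by a unitary of the form $1 \otimes u_n \otimes 1$ placed at the appropriate positions. Hence Lemma~\ref{lem:BasicRestrictions} applies to $A\otimes A$.

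By the K\"unneth theorem \eqref{eq:KunnethFormula} (with the sequence splitting abstractly),
\[
K_*(A\otimes A) \cong K_*(A)\otimes K_*(A) \oplus \Tor(K_*(A),K_*(A)).
\]
Writing $K_*(A) = G_p \oplus H$ (with $H = 0$ in case~(ii); in case~(i), Lemma~\ref{lem:BasicRestrictions} applied directly to $A$ kills the cross terms $G_p \otimes H$ and $\Tor(G_p, H)$), the subgroups $G_p \otimes G_p$ and $\Tor(G_p, G_p)$ appear as direct summands in a common decomposition of $K_*(A\otimes A)$.

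The core argument: both $G_p \otimes G_p$ and $\Tor(G_p, G_p)$ are $p$-groups, and since $G_p$ is a nonzero $p$-group it contains $\Z/p\Z$; then $\Tor$ preserving subgroup injections yields $\Z/p\Z = \Tor(\Z/p\Z, \Z/p\Z) \hookrightarrow \Tor(G_p, G_p)$, so $\Tor(G_p, G_p) \neq 0$. Lemma~\ref{lem:BasicRestrictions}(ii) on $A\otimes A$ now demands $\Tor(G_p \otimes G_p, \Tor(G_p, G_p)) = 0$; but if $G_p \otimes G_p$ were nonzero, it would also contain $\Z/p\Z$, and the same injection argument would give $\Z/p\Z \subseteq \Tor(G_p \otimes G_p, \Tor(G_p, G_p))$, a contradiction. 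Hence $G_p \otimes G_p = 0$; reducing modulo~$p$ gives $G_p / pG_p = 0$, so $G_p$ is $p$-divisible, and being a $p$-group, is divisible. Consequently $G_p \cong \bigoplus_{i \in I} \Q_{p^\infty}/\Z$ for some index set $I$.

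Finally, to show $|I| = 1$: if $|I| \geq 2$, then $\Tor(G_p, G_p) \cong \bigoplus_{i,j \in I} \Q_{p^\infty}/\Z$ contains at least two copies of $\Q_{p^\infty}/\Z$ as direct summands of $K_*(A\otimes A)$, and applying Lemma~\ref{lem:BasicRestrictions}(ii) on $A\otimes A$ to this pair would require $\Tor(\Q_{p^\infty}/\Z, \Q_{p^\infty}/\Z) = 0$, contradicting the computation $\Tor(\Q_{p^\infty}/\Z, \Q_{p^\infty}/\Z) \cong \Q_{p^\infty}/\Z$ from \cite[62.J]{Fuchs:book}. Thus $|I| = 1$ and $G_p \cong \Q_{p^\infty}/\Z$. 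The main obstacle is the first two steps: passing the approximately inner flip hypothesis from $A$ to $A \otimes A$ (in case~(i)) and ensuring, via the abstract splitting of the K\"unneth sequence, that $G_p \otimes G_p$ and $\Tor(G_p, G_p)$ sit together as direct summands of $K_*(A \otimes A)$ in one decomposition, so that Lemma~\ref{lem:BasicRestrictions} can be invoked.
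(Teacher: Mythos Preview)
Your proof is correct and takes a genuinely different route from the paper's. In case~(i), the paper first shows $G_p$ is directly indecomposable (via Lemma~\ref{lem:BasicRestrictions} applied to $A$), invokes \cite[Corollary~27.4]{Fuchs:book} to conclude that $G_p$ is cyclic or Pr\"ufer, and then rules out the cyclic case using Lemma~\ref{lem:TensorKComp}, which exploits the $\Z_2$-grading to place the tensor part of the K\"unneth sequence entirely in $K_0(A\otimes A)$ and the Tor part entirely in $K_1(A\otimes A)$ --- so no splitting of the K\"unneth sequence is needed. Your argument reverses the order: you first force $G_p\otimes G_p=0$ by playing it off against $\Tor(G_p,G_p)$ as summands of $K_*(A\otimes A)$, deduce divisibility, and only then handle indecomposability. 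This trades Fuchs's classification of indecomposable $p$-groups for the (more elementary) structure theorem for divisible groups, but at the cost of invoking the unnatural splitting of the K\"unneth short exact sequence, which the paper's grading trick avoids. Your treatment also unifies cases~(i) and~(ii) by reducing both to a statement about $A\otimes A$ before the core argument, whereas the paper treats~(ii) by bootstrapping from~(i) applied to $A\otimes A$.
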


\begin{proof}
(i):
We assume that $A$ has approximately inner flip.
First let us show that $G_p$ is directly indecomposable, i.e., that it cannot be expressed as a direct sum of two nontrivial groups.
If $G_p=H_1 \oplus H_2$ and $H_1,H_2$ are both nonzero then both are $p$-groups.
Hence $H_1$ contains a subgroup isomorphic to $\Z/p\Z$ and so, by the Cartan-Eilenberg exact sequence for Tor, $\Tor(H_1,H_2)$ contains a subgroup isomorphic to $\Tor(\Z/p\Z,H_2) \neq 0$.
By Lemma \ref{lem:BasicRestrictions} (ii), this contradicts that $A$ has approximately inner flip.

Therefore, $G_p$ is directly indecomposable and by \cite[Corollary 27.4]{Fuchs:book}, $G_p$ is either cyclic or isomorphic to $\Q_{p^\infty}/\Z$.

To rule out the case that $G_p$ is cyclic, suppose for a contradiction that $G_p\cong \Z/n\Z$ ($n$ is necessarily a power of $p$).
Since $G_p$ is directly indecomposable, it occurs in $K_i(A)$ for either $i=0$ or $1$.
Then we see that $\Z/n\Z$ is a direct summand of both $K_i(A) \otimes K_i(A)$ and of $\Tor(K_i(A),K_i(A))$.
By Lemma \ref{lem:TensorKComp}, $\Z/n\Z$ is a direct summand of both $K_0(A \otimes A)$ and $K_1(A \otimes A)$, so that $K_*(A \otimes A)$ contains two direct summands isomorphic to $\Z/n\Z$.
Since $(\Z/n\Z) \otimes (\Z/n\Z) \neq 0$ and $A \otimes A$ has approximately inner flip, this contradicts Lemma \ref{lem:BasicRestrictions} (i).

(ii):
We now assume that $A \otimes A$ has approximately inner flip and that $K_*(A)=G_p$.
By the K\"unneth formula \eqref{eq:KunnethFormula}, $K_*(A \otimes A) \cong \Tor(K_*(A),K_*(A))$.
By the same argument used in (i) to show that $G_p$ is directly indecomposable, if $K_*(A)$ is not directly indecomposable, then neither is $\Tor(K_*(A),K_*(A))$, which cannot be the case by (i).
Hence $G_p$ is directly indecomposable, so by \cite[Corollary 27.4]{Fuchs:book}, $G_p$ is either cyclic or isomorphic to $\Q_{p^\infty}/\Z$
Since $\Tor(K_*(A),K_*(A)) \cong \Q_{p^\infty}/\Z$ (by (i)), only the latter case is possible.
\end{proof}

\begin{proof}[Proof of Theorem \ref{thm:Necessary}]
Let $G=K_*(A)$ and let $T_G$ denote the torsion subgroup of $G$.
Our steps are as follows:
\begin{enumerate}
\item $G/T_G$ has rank at most one;
\item $G$ splits as a direct sum $T_G \oplus G/T_G$;
\item the theorem.
\end{enumerate}

(i): Let $\mathcal Q$ be the universal UHF algebra, so that $A \otimes A \otimes \mathcal Q$ has approximately inner flip.
By the K\"unneth formula \eqref{eq:KunnethFormula}, $K_*(A \otimes \mathcal Q) \cong G \otimes \Q$, which is a rational vector space, and likewise $K_*(A \otimes A \otimes \mathcal Q) \cong (G \otimes \Q)^{\otimes 2}$, so by Lemma \ref{lem:BasicRestrictions} (i), $G \otimes \Q$ must be either $0$ or $\Q$.
Note that $G \otimes \Q \cong (G/T_G) \otimes \Q$ so that $G/T_G$ has rank at most one.

(ii): Since $G/T_G$ is a torsion free group of rank at most one, it is either $0$ or a subgroup of $\Q$.
If $G/T_G=0$, there is nothing more to show for (ii).

If $G/T_G\cong \Z$ then the exact sequence $0 \to T_G \to G \to G/T_G \to 0$ splits and (ii) is established.

Otherwise, $G/T_G \cong \Q_n$ for some supernatural number $n$ of infinite type.
In order to use Lemma \ref{lem:DivSplit} to show that $G\cong T_G \oplus G/T_G$, we need to show that $T_G$ is $n$-divisible.
According to \cite[Theorem 8.4]{Fuchs:book}, $T_G$ is a direct sum of $p$-components $T_p$, over all primes $p$.
For $p$ coprime with $n$, $T_p$ is $n$-divisible; we need to show that $T_p$ is also $n$-divisible when $p$ divides $n$.

Let $p$ be a prime which divides $n$.
By Corollary \ref{cor:Suff}, $\mathcal F_{1,p^\infty}$ has approximately inner flip, whence so does $(A \otimes \mathcal F_{1,p^\infty})^{\otimes 2}$.
We see that $K_*(\mathcal F_{1,p^\infty})\otimes T_G=0$ (since $K_*(\mathcal F_{1,p^\infty})$ is divisible) and $K_*(\mathcal F_{1,p^\infty}) \otimes G/T_G=0$ (since $G/T_G\cong \Q_n$ is $p$-divisible).
Since
\[ K_*(\mathcal F_{1,p^\infty}) \otimes T_G \to K_*(\mathcal F_{1,p^\infty}) \otimes G \to K_*(\mathcal F_{1,p^\infty}) \otimes G/T_G \]
is exact, it follows that $K_*(\mathcal F_{1,p^\infty}) \otimes G = 0$.
Therefore, by the K\"unneth formula \eqref{eq:KunnethFormula}, 
\[ K_*(A \otimes \mathcal F_{1,p^\infty}) \cong \Tor(K_*(A),K_*(\mathcal F_{1,p^\infty})) \cong \Tor(K_*(A),\Q_{p^\infty}/\Z). \]
This is isomorphic to $T_p$ by \cite[62.J]{Fuchs:book}.
By Lemma \ref{lem:NoCycSummand} (ii) (applied to $A \otimes \mathcal F_{1,p^\infty}$ in place of $A$), $T_p$ is either $0$ or $\Q_{p^\infty}/\Z$; in either case, it is $p$-divisible.

This establishes that $T_G$ is $n$-divisible, so by Lemma \ref{lem:DivSplit}, $G\cong T_G \oplus G/T_G$.

(iii):
Now we know that $G\cong T_G \oplus G/T_G$, and $G/T_G$ has rank at most $1$.
If $G/T_G \cong \Z$ then by Lemma \ref{lem:BasicRestrictions}, $T_G=0$.

Otherwise, $G/T_G$ is either $\Q_n$ for some supernatural number $n$ of infinite type or $0$ (in which case we set $n=0$).
The summand $T_G$ is the direct sum of its $p$-components $T_p$ over all primes $p$, and the argument in (ii) shows that, for each $p|n$, $T_p$ is either $0$ or $\Q_{p^\infty}/\Z$.
For each prime $p$ that doesn't divide $n$ (in the case $G/T_G \cong \Q_n$), we have $\Q_n \otimes T_p \cong T_p$, and therefore by Lemma \ref{lem:BasicRestrictions}, $T_p = 0$.

Let $m$ be the supernatural number given by taking the product of all primes $p$ for which $T_p\neq 0$.
Thus, $m|n$ and $T_G \cong \Q_m/\Z$, so that 
\[ G \cong \begin{cases} \Q_n \oplus \Q_m/\Z, \quad &G/T_G \cong \Q_n; \\ \Q_m/\Z, \quad &G/T_G=0. \end{cases} \]
\end{proof}

\begin{proof}[Proof of Theorem \ref{thm:MainThm}]
(iii) $\Rightarrow$ (i) $\Rightarrow$ (ii) are immediate.

(ii) $\Rightarrow$ (iv): By \cite[Proposition 2.10]{EffrosRosenberg}, $A \otimes A$ has a unique trace, and therefore so does $A$.
Moreover, since $A$ has approximately inner half-flip, it is simple and nuclear \cite[Lemma 3.10]{KirchbergPhillips:ExactEmbedding} (alternatively, one could use \cite[Propositions 2.7, 2.8]{EffrosRosenberg} on $A\otimes A$ here).
The K-theoretic restrictions follow from Theorem \ref{thm:Necessary}.

(iv) $\Rightarrow$ (v):
There exists a C*-algebra $B$ isomorphic to one of $\mathcal E_{n,m_0,m_1}$, $E_{n,m_0,m_1} \otimes \mathcal O_\infty$, $E_{n,m_0,m_1} \otimes \mathcal O^\infty$, or $\mathcal F_{m_0,m_1}$, which has the same Elliott invariant (consisting of graded, ordered K-theory, the class of the unit in $K_0$, the space of traces, and the pairing between traces and $K_0$) as $A$, except possibly for the $K_0$-class of the unit.
If $A$ is type I then it must be stably isomorphic to $\mathbb C$.
Otherwise, it is $\mathcal Z$-stable (by \cite{MatuiSato:comp} in the finite case or \cite{KirchbergPhillips:ExactEmbedding} in the infinite case), and either quasidiagonal or purely infinite; in either case classification results show it is stably isomorphic to $B$ (see Remark \ref{rmk:EFdef})

Finally, (v) $\Rightarrow$ (iii) follows from Corollary \ref{cor:Suff}.
\end{proof}

\section{Semigroup structure under the tensor product operation}
\label{sec:Semigroup}

For a supernatural number $n$ of infinite type, let $P_n$ denote the set of all primes which divide $n$.

\begin{prop}
\label{prop:EtensE}
Let $n^{(1)},m^{(1)}_0,m^{(1)}_1,n^{(2)},m^{(2)}_0,m^{(2)}_1$ be supernatural numbers of infinite type, such that $m^{(i)}_0,m^{(i)}_1$ are coprime and $m^{(i)}_0m^{(i)}_1$ divides $n^{(i)}$ for $i=1,2$.
Then $\mathcal E_{n^{(1)},m^{(1)}_0,m^{(1)}_1} \otimes \mathcal E_{n^{(2)},m^{(2)}_0,m^{(2)}_1} \cong \mathcal E_{n,m_0,m_1}$ where:
\begin{enumerate}
\item $n:=n^{(1)}n^{(2)}$;
\item $m_0$ is the product of the following set of primes, each taken infinitely many times
\[ (P_{m^{(1)}_0} \setminus P_{n^{(2)}}) \cup (P_{m^{(2)}_0} \setminus P_{n^{(1)}}) \cup (P_{m^{(1)}_0} \cap P_{m^{(2)}_1}) \cup (P_{m^{(2)}_0} \cap P_{m^{(1)}_1}); \quad \text{and} \]
\item $m_1$ is the product of the following set of primes, each taken infinitely many times
\[ (P_{m^{(1)}_1} \setminus P_{n^{(2)}}) \cup (P_{m^{(2)}_1} \setminus P_{n^{(1)}}) \cup (P_{m^{(1)}_1} \cap P_{m^{(2)}_1}) \cup (P_{m^{(2)}_0} \cap P_{m^{(1)}_0}). \]
\end{enumerate}
\end{prop}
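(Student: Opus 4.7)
The plan is to recognize $\mathcal E_{n^{(1)},m^{(1)}_0,m^{(1)}_1} \otimes \mathcal E_{n^{(2)},m^{(2)}_0,m^{(2)}_1}$ as $\mathcal E_{n,m_0,m_1}$ by matching the defining structural properties and the Elliott invariant, then invoking the Matui--Sato classification referenced in Remark \ref{rmk:EFdef}. The structural hypotheses---simplicity, separability, unitality, nuclearity, $\mathcal Z$-stability, quasidiagonality, UCT, and the unique trace property---all descend to minimal tensor products of two such algebras, so they transfer for free. Similarly the trace pairs with $K_0$ by the projection onto the $\Q_n$-summand, which is automatic once K-theory and the unique trace are matched. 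The substantive tasks are thus the K-theory computation, the verification that the declared $n,m_0,m_1$ satisfy the standing hypotheses ($m_0,m_1$ coprime of infinite type and $m_0m_1 \mid n$), and tracking the class of the unit.

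For the K-theory, I would plug the decompositions of $K_0$ and $K_1$ of each factor into the K\"unneth sequence \eqref{eq:KunnethFormula}, distribute $\otimes$ and $\Tor$ across the direct sums, and apply the elementary identities
\[ \Q_n \otimes \Q_{p^\infty}/\Z \cong \begin{cases} 0, & p \in P_n, \\ \Q_{p^\infty}/\Z, & p \notin P_n, \end{cases} \qquad \Q_{p^\infty}/\Z \otimes \Q_{q^\infty}/\Z = 0 \]
for all primes $p,q$ (each element of a Pr\"ufer group is annihilated by a power of its prime, and one can then pull that power across the tensor), together with $\Tor(\Q_{p^\infty}/\Z,\Q_{q^\infty}/\Z) = \Q_{p^\infty}/\Z$ if $p=q$ and $0$ otherwise (as used in the proof of Proposition \ref{prop:PruferFlip}), and the vanishing of $\Tor$ whenever a torsion-free group appears. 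Tracking which of the four $\otimes$-terms and the four $\Tor$-terms each prime contributes to recovers exactly the four prime strata in the definitions of $m_0$ and $m_1$, and gives $\Q_{n^{(1)}}\otimes\Q_{n^{(2)}} = \Q_n$ for the torsion-free part.

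The K\"unneth extensions for $K_0$ and $K_1$ of the tensor product are then extensions of $\Q_{?}/\Z$ (the $\Tor$-contributions) by $\Q_n \oplus \Q_{?}/\Z$ or by $\Q_{?}/\Z$ (the $\otimes$-contributions) respectively. To split them, I would apply Lemma \ref{lem:DivSplit}: for each prime $p$ in the quotient's prime set, the torsion part of the subgroup must be $p$-divisible, which amounts to checking that $p$ does not appear in the corresponding prime stratum of the subgroup. For instance, for $K_0$ a prime $p$ in the $\Tor$-piece belongs to $P_{m^{(1)}_0} \cap P_{m^{(2)}_1}$ or $P_{m^{(1)}_1} \cap P_{m^{(2)}_0}$; in either case the coprimality $\gcd(m^{(i)}_0,m^{(i)}_1)=1$ and the divisibility $m^{(i)}_0 m^{(i)}_1 \mid n^{(i)}$ force $p \notin P_{m^{(1)}_0}\setminus P_{n^{(2)}}$ and $p \notin P_{m^{(2)}_0}\setminus P_{n^{(1)}}$, so the splitting hypothesis holds. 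The same coprimality/divisibility facts also produce $P_{m_0} \cap P_{m_1} = \emptyset$ and $P_{m_0 m_1} \subset P_n$, showing that $\mathcal E_{n,m_0,m_1}$ with the claimed $n,m_0,m_1$ is well-defined.

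The main obstacle is not conceptual but combinatorial: correctly cross-referencing the eight prime strata that arise from the four $\otimes$- and four $\Tor$-contributions, and verifying at each step that the coprimality and divisibility of the input data force the right disjointness. Once this bookkeeping is done, the class of the unit transports through $\alpha_{A_1, A_2}$ to $(1\otimes 1)\oplus 0 = 1\oplus 0 \in \Q_n \oplus \Q_{m_0}/\Z$, completing the match of invariants and delivering the isomorphism via Matui--Sato.
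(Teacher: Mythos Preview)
Your proposal is correct and follows essentially the same route as the paper: verify the structural hypotheses on the tensor product, compute $K_*$ via the K\"unneth formula by distributing $\otimes$ and $\Tor$ over the direct-sum decompositions of $K_*(\mathcal E_{n^{(i)},m^{(i)}_0,m^{(i)}_1})$, split the resulting extension, track the class of the unit, and invoke classification (Remark \ref{rmk:EFdef}). The only notable difference is in the splitting step: the paper asserts that the $\otimes$-subgroup is divisible (hence an injective $\Z$-module and therefore a direct summand), whereas you invoke Lemma \ref{lem:DivSplit} and check $p$-divisibility prime by prime; your argument is in fact the more careful one here, since $\Q_{n^{(1)}n^{(2)}}$ is not divisible unless every prime divides $n^{(1)}n^{(2)}$, though the paper's conclusion is easily salvaged by instead noting that the $\Tor$-quotient is divisible.
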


\begin{proof}
Set $A:=\mathcal E_{n^{(1)},m^{(1)}_0,m^{(1)}_1} \otimes \mathcal E_{n^{(2)},m^{(2)}_0,m^{(2)}_1}$.
This is a simple, separable, unital, $\mathcal Z$-stable, quasidiagonal C*-algebra that satisfies the UCT.
By the K\"unneth formula \eqref{eq:KunnethFormula}, we obtain the following short exact sequence
\begin{align}
\label{eq:EtensEexseq}
\notag
&0 \rightarrow (\Q_{n^{(1)}} \oplus \Q_{m^{(1)}_0}/\Z) \otimes (\Q_{n^{(2)}} \oplus \Q_{n^{(2)}}/\Z) \\
&\qquad \rightarrow K_0(A) \rightarrow \Tor(\Q_{m^{(1)}_0},\Q_{m^{(2)}_1}) \oplus \Tor(\Q_{m^{(1)}_1},\Q_{m^{(2)}_0}) \to 0.
\end{align}
The first term in this exact sequence is equal to
\[ \Q_{n^{(1)}n^{(2)}} \oplus \Q_{k_1^\infty}/\Z \oplus \Q_{k_2^\infty}/\Z \]
where $k_1$ is the product of the primes in $(P_{m^{(1)}_0} \setminus P_{n^{(2)}})$ and $k_2$ is the product of the primes in $(P_{m^{(2)}_0} \setminus P_{n^{(1)}})$.
Since this term is a divisible group, it is a direct summand of $K_0(A)$.

By \cite[62.J]{Fuchs:book}, the last term in the exact sequence \eqref{eq:EtensEexseq} is equal to
\[ \Q_{k_3^\infty}/\Z \oplus \Q_{k_4^\infty}/\Z, \]
where $k_3$ is the product of the primes in $(P_{m^{(1)}_0} \cap P_{m^{(2)}_1})$ and $k_4$ is the product of the primes in $(P_{m^{(2)}_0} \cap P_{m^{(1)}_1})$.

Let us argue that $k_1,\dots,k_4$ are (pairwise) coprime.
First, since $m^{(2)}_0m^{(2)}_1$ divides $n^{(2)}$, it follows that $\gcd(k_1,k_2)=\gcd(k_2,k_3)=\gcd(k_1,k_4)=1$.
Likewise, since $m^{(1)}_0m^{(1)}_1$ divides $n^{(1)}$, it follows that $\gcd(k_2,k_3)=\gcd(k_2,k_4)=1$.
Finally, since $m^{(1)}_0,m^{(1)}_1$ are coprime, it follows that $\gcd(k_3,k_4)=1$.
Hence, $k_1,\dots,k_4$ are coprime.
Consequently,
\[ \Q_{k_1^\infty}/\Z \oplus \cdots \oplus \Q_{k_4^\infty}/\Z \cong \Q_{(k_1\cdots k_4)^\infty}/\Z = \Q_{m_0}/\Z, \]
and therefore,
\[ K_0(A) \cong \Q_n \oplus \Q_{m_0}/\Z. \]

Essentially the same argument shows that $K_1(A) \cong \Q_{m_1}/\Z$.
Also 
\[ [1_A]_0 = [1_{\mathcal E_{n^{(1)},m^{(1)}_0,m^{(1)}_1}}]_0 \otimes [\mathcal E_{n^{(1)},m^{(1)}_0,m^{(1)}_1}]_0 = 1 \oplus 0 \in \Q_n \oplus \Q_{m_0}/\Z. \]
Hence, by classification (see Remark \ref{rmk:EFdef}), $A \cong \mathcal E_{n,m_0,m_1}$.
\end{proof}

Essentially the same arguments can be used to derive the next two computations.

\begin{prop}
Let $n,m^{(1)}_0,m^{(1)}_1,m^{(2)}_0,m^{(2)}_1$ be supernatural numbers of infinite type, such that $m^{(i)}_0,m^{(i)}_1$ are coprime for $i=1,2$ and $m^{(1)}_0m^{(1)}_1$ divides $n$.
Then $\mathcal E_{n,m^{(1)}_0,m^{(1)}_1} \otimes \mathcal F_{m^{(2)}_0,m^{(2)}_1} \cong \mathcal F_{m_0,m_1}$ where:
\begin{enumerate}
\item $m_0$ is the product of the following set of primes, each taken infinitely many times
\[ (P_{m^{(2)}_0} \setminus P_{n}) \cup (P_{m^{(1)}_0} \cap P_{m^{(2)}_1}) \cup (P_{m^{(2)}_0} \cap P_{m^{(1)}_1}); \quad \text{and} \]
\item $m_1$ is the product of the following set of primes, each taken infinitely many times
\[ (P_{m^{(2)}_1} \setminus P_{n}) \cup (P_{m^{(1)}_1} \cap P_{m^{(2)}_1}) \cup (P_{m^{(2)}_0} \cap P_{m^{(1)}_0}). \]
\end{enumerate}
\end{prop}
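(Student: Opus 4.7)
The argument parallels the proof of Proposition \ref{prop:EtensE}, with one of the $\mathcal E$-factors replaced by $\mathcal F_{m^{(2)}_0,m^{(2)}_1}$. Set $A := \mathcal E_{n,m^{(1)}_0,m^{(1)}_1} \otimes \mathcal F_{m^{(2)}_0,m^{(2)}_1}$. First, $A$ is a unital Kirchberg algebra in the UCT class: the minimal tensor product with a simple, separable, nuclear, unital C*-algebra preserves unitality, separability, nuclearity, simplicity, and the UCT, and tensoring with a purely infinite simple C*-algebra produces a purely infinite algebra. So by Kirchberg-Phillips classification (Remark \ref{rmk:EFdef}), to identify $A$ with $\mathcal F_{m_0,m_1}$ it suffices to verify $K_0(A) \cong \Q_{m_0}/\Z$, $K_1(A) \cong \Q_{m_1}/\Z$, and $[1_A]_0 = 0$.

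Next I would compute $K_*(A)$ via the K\"unneth formula \eqref{eq:KunnethFormula}, relying on four elementary observations for supernatural numbers $k,\ell$ of infinite type: $\Q_k \otimes \Q_\ell/\Z \cong \Q_{j^\infty}/\Z$ where $j$ is the product of primes in $P_\ell \setminus P_k$ (via the exact sequence $0 \to \Z \to \Q_k \to \Q_k/\Z \to 0$); $\Q_k/\Z \otimes \Q_\ell/\Z = 0$, because each Pr\"ufer group $\Q_{p^\infty}/\Z$ is simultaneously $p$-divisible and $p$-torsion; $\Tor(\Q_k,-) = 0$, since torsion-free abelian groups are flat; and $\Tor(\Q_k/\Z,\Q_\ell/\Z) \cong \Q_{j^\infty}/\Z$ where $j$ is the product of primes in $P_k \cap P_\ell$, by \cite[62.J]{Fuchs:book}. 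Feeding the K-theories of the two factors (recalled in Section \ref{sec:Family}) into the K\"unneth sequence then produces
\[ 0 \to \Q_{j_1^\infty}/\Z \to K_0(A) \to \Q_{j_2^\infty}/\Z \oplus \Q_{j_3^\infty}/\Z \to 0, \]
where $j_1,j_2,j_3$ are the products of the primes in $P_{m^{(2)}_0} \setminus P_n$, $P_{m^{(1)}_0} \cap P_{m^{(2)}_1}$, and $P_{m^{(1)}_1} \cap P_{m^{(2)}_0}$ respectively, and an analogous short exact sequence for $K_1(A)$ with the three sets replaced by those appearing in the definition of $m_1$.

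Since $m^{(1)}_0 m^{(1)}_1$ divides $n$, the primes dividing $j_2$ or $j_3$ lie in $P_n$, while those dividing $j_1$ do not; and since $m^{(1)}_0, m^{(1)}_1$ are coprime, so are $j_2$ and $j_3$. Hence $j_1,j_2,j_3$ are pairwise coprime. The leftmost term of the sequence is divisible, hence injective, so the sequence splits, and $K_0(A) \cong \Q_{m_0}/\Z$ with $m_0 = j_1 j_2 j_3$ matching the formula in the statement. The same reasoning yields $K_1(A) \cong \Q_{m_1}/\Z$. Finally, $[1_A]_0 = [1_{\mathcal E_{n,m^{(1)}_0,m^{(1)}_1}}]_0 \otimes [1_{\mathcal F_{m^{(2)}_0,m^{(2)}_1}}]_0 = 0$, because $[1_{\mathcal F_{m^{(2)}_0,m^{(2)}_1}}]_0 = 0$. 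The only substantive obstacle is the combinatorial bookkeeping -- tracking which prime sets contribute to each summand and checking pairwise coprimality -- which mirrors the work already done in Proposition \ref{prop:EtensE}.
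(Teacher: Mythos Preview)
Your proposal is correct and follows exactly the approach the paper intends: the paper itself gives no separate proof for this proposition, merely remarking that ``essentially the same arguments'' as in Proposition~\ref{prop:EtensE} apply, and your argument is precisely that adaptation (K\"unneth computation, identification of the prime sets, coprimality check, splitting via divisibility, and the unit computation using $[1_{\mathcal F_{m^{(2)}_0,m^{(2)}_1}}]_0 = 0$).
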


\begin{prop}
Let $m^{(1)}_0,m^{(1)}_1,m^{(2)}_0,m^{(2)}_1$ be supernatural numbers of infinite type, such that $m^{(i)}_0,m^{(i)}_1$ are coprime for $i=1,2$.
Then $\mathcal F_{m^{(1)}_0,m^{(1)}_1} \otimes \mathcal F_{m^{(2)}_0,m^{(2)}_1} \cong \mathcal F_{m_0,m_1}$ where:
\begin{enumerate}
\item $m_0$ is the product of the following set of primes, each taken infinitely many times
\[ (P_{m^{(1)}_0} \cap P_{m^{(2)}_1}) \cup (P_{m^{(2)}_0} \cap P_{m^{(1)}_1}); \quad \text{and} \]
\item $m_1$ is the product of the following set of primes, each taken infinitely many times
\[ (P_{m^{(1)}_1} \cap P_{m^{(2)}_1}) \cup (P_{m^{(2)}_0} \cap P_{m^{(1)}_0}). \]
\end{enumerate}
\end{prop}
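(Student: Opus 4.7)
The plan is to follow the blueprint of Proposition \ref{prop:EtensE} essentially verbatim, with substantial simplifications arising from the fact that all relevant K-groups are torsion divisible. Setting $A := \mathcal F_{m^{(1)}_0,m^{(1)}_1} \otimes \mathcal F_{m^{(2)}_0,m^{(2)}_1}$, I would first note that $A$ is a unital Kirchberg algebra satisfying the UCT: simplicity, separability, and nuclearity pass through the tensor product of simple nuclear C*-algebras; pure infiniteness follows because each factor is $\mathcal O_\infty$-absorbing by Kirchberg-Phillips; and the UCT is preserved under minimal tensor products.

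Next, I would compute $K_*(A)$ from the K\"unneth formula \eqref{eq:KunnethFormula}. Since each $\Q_{m^{(i)}_j}/\Z$ is both divisible and torsion, every tensor product of the form $\Q_{m^{(1)}_j}/\Z \otimes \Q_{m^{(2)}_k}/\Z$ vanishes, so the $\otimes$-part of the K\"unneth sequence contributes nothing, and
\begin{align*}
K_0(A) &\cong \Tor(\Q_{m^{(1)}_0}/\Z, \Q_{m^{(2)}_1}/\Z) \oplus \Tor(\Q_{m^{(1)}_1}/\Z, \Q_{m^{(2)}_0}/\Z), \\
K_1(A) &\cong \Tor(\Q_{m^{(1)}_0}/\Z, \Q_{m^{(2)}_0}/\Z) \oplus \Tor(\Q_{m^{(1)}_1}/\Z, \Q_{m^{(2)}_1}/\Z).
\end{align*}
Using the identity $\Tor(\Q_a/\Z,\Q_b/\Z) \cong \Q_c/\Z$, where $c = \prod_{p \in P_a \cap P_b} p^\infty$ (\cite[62.J]{Fuchs:book}), each Tor-summand becomes a Pr\"ufer-type group indexed by an intersection of prime sets.

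To reassemble each of the two direct sums into a single $\Q_{m_0}/\Z$ or $\Q_{m_1}/\Z$, I would verify that the two prime sets appearing in each expression are disjoint. For $K_0(A)$, a prime $p$ lying in both $P_{m^{(1)}_0} \cap P_{m^{(2)}_1}$ and $P_{m^{(1)}_1} \cap P_{m^{(2)}_0}$ would belong to $P_{m^{(1)}_0} \cap P_{m^{(1)}_1}$, contradicting the coprimality of $m^{(1)}_0$ and $m^{(1)}_1$; the $K_1$ computation is symmetric. A similar case analysis across the four possible overlaps shows that the resulting $m_0$ and $m_1$ are themselves coprime, so that $\mathcal F_{m_0,m_1}$ sits properly in the family of Section \ref{sec:Family}.

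Finally, the unit class of $A$ in $K_0(A)$ is $[1]_0 \otimes [1]_0 = 0$, inherited from both factors, and both $A$ and $\mathcal F_{m_0,m_1}$ are unital Kirchberg algebras in the UCT class with matching Elliott invariant; Kirchberg-Phillips classification then delivers the isomorphism $A \cong \mathcal F_{m_0,m_1}$. There is no genuine obstacle in this proof: the heart of the argument is the elementary but careful accounting of prime intersections, and everything else is an immediate specialization of the K\"unneth plus classification strategy already used in Proposition \ref{prop:EtensE}.
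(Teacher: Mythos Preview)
Your proposal is correct and follows exactly the approach the paper intends: the paper does not give a separate proof for this proposition but simply remarks that ``essentially the same arguments'' as in Proposition \ref{prop:EtensE} apply, and your write-up carries out precisely that specialization. The simplifications you note (vanishing of all tensor terms because the K-groups are divisible torsion, so that $K_*(A)$ is pure Tor) are accurate, and your coprimality verification and unit-class computation are the expected bookkeeping.
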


Let $A$ be a classifiable C*-algebra with approximately inner flip (recall from Remark \ref{rmk:MainThmClass} that these are precisely C*-algebras in Theorem \ref{thm:MainThm} with approximately inner flip).
Then, by considering the various cases from Theorem \ref{thm:MainThm} (v), and using the above computations as appropriate, $A \otimes A$ is stably isomorphic to one of
\begin{enumerate}
\item $\mathcal Z$;
\item $\mathcal E_{n,1,m}$ where $m,n$ are supernatural numbers of infinite type and $m$ divides $n$;
\item $\mathcal E_{n,1,m} \otimes \mathcal O_\infty$ where $m,n$ are supernatural numbers of infinite type and $m$ divides $n$ (this occurs in both cases (b) or (c) of Theorem \ref{thm:MainThm} (v)); or
\item $\mathcal F_{1,m}$.
\end{enumerate}
Each of these C*-algebras is self-absorbing (isomorphic to its tensor product with itself), which establishes the corollary below.
Among them, $\mathcal Z$, $\mathcal E_{n,1,1}$, $\mathcal E_{n,1,1} \otimes \mathcal O_\infty$, and $\mathcal F_{1,1}$ are strongly self-absorbing (recall that $\mathcal E_{n,1,1} \cong M_n$ and $\mathcal F_{1,1} \cong \mathcal O_2$), hence each of the others is not isomorphic to their own infinite tensor product (by \cite[Proposition 1.9]{TomsWinter:ssa}).
In fact, without much effort, one can show using classification that $\mathcal E_{n,1,m}^{\otimes \infty} \cong M_n$ and $\mathcal F_{1,m}^{\otimes \infty} \cong \mathcal O_2$.

\begin{cor}
\label{cor:AAsa}
Let $A$ be a classifiable C*-algebra with approximately inner flip.
Then $A \otimes A \otimes$ is stably self-absorbing (i.e., $A^{\otimes 2} \otimes \mathcal K \cong A^{\otimes 4} \otimes \mathcal K$).
\end{cor}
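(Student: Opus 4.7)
The plan is to derive the corollary as a case analysis using Theorem \ref{thm:MainThm} and the three tensor product computations already established in this section. First, I would apply the implication (i)$\Rightarrow$(v) of Theorem \ref{thm:MainThm} to reduce to the case where $A$ is stably isomorphic to one of $\mathbb C$, $\mathcal E_{n,m_0,m_1}$, $\mathcal E_{n,m_0,m_1}\otimes\mathcal O_\infty$, $\mathcal E_{n,m_0,m_1}\otimes\mathcal O^\infty$, or $\mathcal F_{m_0,m_1}$, with the usual coprimality and divisibility conditions on $n,m_0,m_1$. The $\mathbb C$ case is trivial.

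Next, for each of the remaining cases, I would compute $A\otimes A$ explicitly using Proposition \ref{prop:EtensE} (and its $\mathcal E\otimes\mathcal F$ and $\mathcal F\otimes\mathcal F$ analogues) applied with $n^{(1)}=n^{(2)}=n$ and $m^{(i)}_j=m_j$. The key observation is that, under these specializations, the hypothesis $m_0m_1\mid n$ forces $P_{m_j}\subseteq P_n$ (so the ``$\setminus P_n$'' sets are empty), while the coprimality $\gcd(m_0,m_1)=1$ empties the cross-intersections $P_{m_0}\cap P_{m_1}$; what remains collapses to $\mathcal E_{n,m_0,m_1}^{\otimes 2}\cong\mathcal E_{n,1,m_0m_1}$ and analogously $\mathcal F_{m_0,m_1}^{\otimes 2}\cong\mathcal F_{1,m_0m_1}$. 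The cases involving $\mathcal O_\infty$ split off as $(\mathcal E\otimes\mathcal O_\infty)^{\otimes 2}\cong\mathcal E^{\otimes 2}\otimes\mathcal O_\infty$ because $\mathcal O_\infty$ is strongly self-absorbing. For the $\mathcal O^\infty$ case, a one-line K\"unneth computation (using that $K_*(\mathcal O^\infty)\cong(0,\Z)$) shows that $K_*(\mathcal O^\infty\otimes\mathcal O^\infty)\cong(\Z,0)$, so by Kirchberg-Phillips classification $\mathcal O^\infty\otimes\mathcal O^\infty\otimes\mathcal K\cong\mathcal O_\infty\otimes\mathcal K$, reducing this case to the previous one.

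Thus $A\otimes A$ is stably isomorphic to one of $\mathcal Z$, $\mathcal E_{n,1,m}$, $\mathcal E_{n,1,m}\otimes\mathcal O_\infty$, or $\mathcal F_{1,m}$. The final step is to show each of these algebras is stably self-absorbing, which follows from a second application of the same propositions. For instance, applying Proposition \ref{prop:EtensE} with $m^{(i)}_0=1$, $m^{(i)}_1=m$ gives $\mathcal E_{n,1,m}\otimes\mathcal E_{n,1,m}\cong\mathcal E_{n,1,m}$; the $\mathcal F_{1,m}$ case is analogous, and the remaining cases are handled by combining these with the strong self-absorption of $\mathcal Z$ and $\mathcal O_\infty$. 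I do not anticipate any genuine obstacle: the work consists of enumerating a handful of cases and performing combinatorial simplifications of the $P$-set formulas, which are routine once the hypotheses $m_0m_1\mid n$, $\gcd(m_0,m_1)=1$, and $n^2=n$ (in the sense of infinite-type supernatural numbers) are recorded.
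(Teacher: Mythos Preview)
Your proposal is correct and follows essentially the same route as the paper: reduce via Theorem~\ref{thm:MainThm}(v), use Proposition~\ref{prop:EtensE} and its analogues to compute $A\otimes A$ in each case (obtaining one of $\mathcal Z$, $\mathcal E_{n,1,m}$, $\mathcal E_{n,1,m}\otimes\mathcal O_\infty$, or $\mathcal F_{1,m}$), and then observe that each of these is self-absorbing by a second application of the same propositions. Your handling of the $\mathcal O^\infty$ case via $K_*(\mathcal O^\infty\otimes\mathcal O^\infty)\cong(\Z,0)$ and Kirchberg--Phillips is exactly what is implicit in the paper's remark that cases (c) and (d) of Theorem~\ref{thm:MainThm}(v) both lead to $\mathcal E_{n,1,m}\otimes\mathcal O_\infty$.
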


We may organize the self-absorbing classifiable C*-algebras with approximately inner flip (listed above) by ordering them according to absorption ($A \prec B$ if $B \cong A \otimes B$).
I would like to thank Mikael R\o rdam for suggesting to do so.
The following diagram captures this ordering (lower algebras absorb the higher algebras).
\[ \xymatrix{
& \mathcal E_{1,1,1} \cong \mathbb C \ar@{-}[d] & \\
& \mathcal Z \ar@{-}[dr] \ar@{-}[dl] & \\
\mathcal O_\infty \ar@{-}[dr] && \mathcal E_{n,1,m} \ar@{-}[dl] & \\
&\mathcal E_{n,1,m} \otimes \mathcal O_\infty \ar@{-}[d] & \\
&\mathcal F_{1,m}. &
} \]
Within the family of algebras of the form $\mathcal E_{n,1,m}$, we have $\mathcal E_{n,1,m} \cong \mathcal E_{n,1,m} \otimes \mathcal E_{n',1,m'}$ iff $n'|n$ and $m|m'$.
Hence, locally the ordering on this family looks like the following, where $m,n,p,q$ are supernatural numbers of infinite type such that $m,p$ are coprime, $n,q$ are coprime, and $m$ and $p$ both divide $n$.
\[ \xymatrix{
& \mathcal E_{n,1,mp} \ar@{-}[dr] \ar@{-}[dl] & \\
\mathcal E_{nq,1,mp} \ar@{-}[dr] && \mathcal E_{n,1,m} \ar@{-}[dl] & \\
& \mathcal E_{nq,1,m}. &
} \]
The ordering within the families of algebras of the form $\mathcal E_{n,1,m} \otimes \mathcal O_\infty$ and $\mathcal F_{1,m}$ are similar.

\newcommand{\cstar}{C*}

\end{document}